\documentclass[12pt]{article}

\usepackage{amsmath}
\usepackage{amsthm}
\usepackage{amssymb}

\usepackage{hyperref}

\usepackage{cite}
\usepackage[multiple]{footmisc}

\usepackage{graphicx}
\usepackage{tikz}

\usepackage{enumitem}
\setlist[enumerate]{label={\textnormal{(\roman*)}}}

\newtheorem{theorem}{Theorem}[]
\newtheorem{proposition}[theorem]{Proposition}
\newtheorem{lemma}[theorem]{Lemma}

\theoremstyle{definition}
\newtheorem{conjecture}[theorem]{Conjecture}
\newtheorem*{remark}{Remark}

\renewcommand{\tt}[1]{\textnormal{\texttt{#1}}}

\DeclareMathOperator{\Stab}{Stab}
\DeclareMathOperator{\RT}{RT}

\DeclareMathOperator{\CRTS}{CRT_S}
\DeclareMathOperator{\CRTI}{CRT_I}
\DeclareMathOperator{\CRTW}{CRT_W}
\DeclareMathOperator{\WRT}{WRT}
\newcommand{\circular}[1]{\langle{#1}\rangle}

\usepackage{color}
\newcommand{\red}[1]{\textcolor{red}{#1}}

\title{The Weak Circular Repetition Threshold over Large Alphabets}
\author{Lucas~Mol and Narad~Rampersad\footnote{The work of Narad Rampersad is supported by the Natural Sciences and Engineering Research Council of Canada (NSERC), [funding reference number 2019-04111].}\\
\small Department of Mathematics and Statistics\\
\small The University of Winnipeg\\
\small 515 Portage Ave.\\
\small Winnipeg, MB, Canada\\
\small R3B 2E9\\
\small \{l.mol, n.rampersad\}@uwinnipeg.ca}
\date{}

\begin{document}

\maketitle

\begin{abstract}
\noindent
The \emph{repetition threshold} for words on $n$ letters, denoted $\RT(n)$, is the infimum of the set of all $r$ such that there are arbitrarily long $r$-free words over $n$ letters.  A repetition threshold for circular words on $n$ letters can be defined in three natural ways, which gives rise to the \emph{weak}, \emph{intermediate}, and \emph{strong} circular repetition thresholds for $n$ letters, denoted $\CRTW(n)$, $\CRTI(n)$, and $\CRTS(n)$, respectively.  Currie and the present authors conjectured that $\CRTI(n)=\CRTW(n)=\RT(n)$ for all $n\geq 4$.  We prove that $\CRTW(n)=\RT(n)$ for all $n\geq 45$, which confirms a weak version of this conjecture for all but finitely many values of $n$.

\noindent
{\bf MSC 2010:} 68R15 (primary), 05C15 (secondary)

\noindent
{\bf Keywords:} repetition threshold; circular repetition threshold; repetition threshold for graphs; Dejean's conjecture; Dejean's theorem; nonrepetitive colouring
\end{abstract}

\section{Introduction}

Throughout, we use standard definitions and notations from combinatorics on words (see~\cite{LothaireAlgebraic}).  The word $u$ is a factor of the word $w$ if we can write $w=xuy$ for some (possibly empty) words $x,y$.  If at least one of $x,y$ is nonempty, then we say that $u$ is a \emph{proper factor} of $w$.  For a set of words $L$, the word $u$ is a \emph{factor} of $L$ if $u$ is a factor of some word in $L$.

Let $w=w_1w_2\cdots w_k$ be a word, where the $w_i$'s are letters.  A positive integer $p$ is a \emph{period} of $w$ if $w_{i+p}=w_i$ for all $1\leq i\leq k-p$.  In this case, we say that $|w|/p$ is an \emph{exponent} of $w$, and the largest such number is called \emph{the} exponent of $w$.  For a real number $r>1$, a finite or infinite word $w$ is called $r$-free ($r^+$-free) if $w$ contains no factors of exponent greater than or equal to $r$ (strictly greater than $r$, respectively).

Throughout, for every positive integer $n$, let $A_n$ denote the $n$-letter alphabet $\{\tt{1},\tt{2},\dots,\tt{n}\}$.  For every $n\geq 2$, the \emph{repetition threshold} for $n$ letters, denoted $\RT(n)$, is defined by
\[
\RT(n)=\inf\{r>1\colon\ \text{there are arbitrarily long $r$-free words over $A_n$}\}.
\]
Essentially, the repetition threshold describes the border between avoidable and unavoidable repetitions in words over an alphabet of $n$ letters.  The repetition threshold was first defined by Dejean~\cite{Dejean1972}.  Her 1972 conjecture on the values of $\RT(n)$ has now been confirmed through the work of many authors~\cite{Dejean1972,Pansiot1984,MoulinOllagnier1992,MohammadNooriCurrie2007,Carpi2007,CurrieRampersad2009,CurrieRampersad2009Again,CurrieRampersad2011,Rao2011}:
\[
\RT(n)=\begin{cases}
2, &\text{ if $n=2$};\\
7/4, & \text{ if $n=3$};\\
7/5, & \text{ if $n=4$};\\
n/(n-1), & \text{ if $n\geq 5$.}
\end{cases}
\]
The last cases of Dejean's conjecture were confirmed in 2011 by Currie and the second author~\cite{CurrieRampersad2011}, and independently by Rao~\cite{Rao2011}.  However, probably the most important contribution was made by Carpi~\cite{Carpi2007}, who confirmed the conjecture for all but finitely many values of $n$.

Here, we are interested in the notion of a repetition threshold for \emph{circular words} on $n$ letters.  Two (linear) words $x$ and $y$ are said to be \emph{conjugates} if there are words $u$ and $v$ such that $x=uv$ and $y=vu$.  The conjugates of a word $w$ can be obtained by rotating the letters of $w$ cyclically.  For a word $w$, the \emph{circular word} $\circular{w}$ is the set of all conjugates of $w$.  Intuitively, one can think of a circular word as being obtained from a linear word by linking the ends, giving a cyclic sequence of letters.  A \emph{circumnavigation} of the circular word $\circular{w}$ is a word of the form $ava$, where $a$ is a letter and $av$ is a conjugate of $w$.

By the definition of a factor of a set of words, a word is a factor of a circular word $\circular{w}$ if and only if it is a factor of some conjugate of $w$.  As for linear words, a circular word is $r$-free ($r^+$-free) if it has no factors of exponent greater than or equal to $r$ (strictly greater than $r$, respectively).

While the factors of an $r$-free circular word must be $r$-free as linear words, they need not necessarily be $r$-free when taken as circular words.  For example, the $2$-free circular word $\circular{\tt{012021}}$ contains the factor $\tt{0120}$, and the circular word $\circular{\tt{0120}}$ contains the square factor $00$.  This means that several equivalent definitions of the repetition threshold $\RT(n)$ give rise to distinct notions of a repetition threshold for circular words.

In this paper, we will be most interested in the \emph{weak circular repetition threshold} for $n$ letters, denoted $\CRTW(n)$, and defined by
\[
\CRTW(n)=\inf\{\text{\footnotesize $r>1\colon$ there are arbitrarily long $r$-free circular words over $A_n$}\}.
\]
The \emph{intermediate circular repetition threshold} for $n$ letters, denoted $\CRTI(n)$, is defined by
\[
\CRTI(n)=\inf\{\text{\scriptsize $r>1\colon$ there are $r$-free circular words of every sufficiently large length over $A_n$}\},
\]
and the \emph{strong circular repetition threshold} for $n$ letters, denoted $\CRTS(n)$, is defined by
\begin{align*}
\CRTS(n)&=\inf\{\text{\footnotesize $r>1\colon$ there are $r$-free circular words of every length over $A_n$}\}.
\end{align*}
Evidently, we have 
\begin{align}\label{Hierarchy}
\RT(n)\leq \CRTW(n)\leq \CRTI(n)\leq \CRTS(n)
\end{align} for all $n\geq 2$.  Table~\ref{CRTTable} contains all of the confirmed and conjectured values of $\RT(n)$, $\CRTW(n)$, $\CRTI(n)$, and $\CRTS(n)$.  Note in particular that
\[
\CRTW(2)<\CRTI(2)<\CRTS(2),
\]
which shows that the three notions of circular repetition threshold are not equivalent.

\begin{table}[htb]
\renewcommand{\arraystretch}{1.5}
\begin{center}
\begin{tabular}{r | c c c c}
$n$ & $2$ & $3$ & $4$ &  $n\geq 5$\\\hline
$\RT(n)$ & \boldmath{$2$} & \boldmath{$\frac{7}{4}$} & \boldmath{$\frac{7}{5}$} & \boldmath{$\frac{n}{n-1}$}\\
$\CRTW(n)$ & \boldmath{$2$} & \boldmath{$\frac{7}{4}$} & \red{$\frac{7}{5}$} & \red{$\frac{n}{n-1}$}\\
$\CRTI(n)$ & \boldmath{$\frac{7}{3}$} & \boldmath{$\frac{7}{4}$} & \red{$\frac{7}{5}$} & \red{$\frac{n}{n-1}$}\\
$\CRTS(n)$ & \boldmath{$\frac{5}{2}$} & \boldmath{$2$} & \boldmath{$\frac{3}{2}$} & \boldmath{$\frac{\lceil n/2\rceil+1}{\lceil n/2\rceil}$}
\end{tabular}
\end{center}
\caption{The confirmed and conjectured values of the weak, intermediate, and strong circular repetition thresholds.  Confirmed values are in bold font, while conjectured values are in normal font (and coloured red).}
\label{CRTTable}
\end{table}

Through the work of several authors~\cite{AberkaneCurrie2004,Currie2002,Shur2011,Gorbunova2012,CurrieMolRampersad2018}, all values of the strong circular repetition threshold are known:
\[
\CRTS(n)=\begin{cases}
5/2, &\text{ if $n=2$;}\\
2, &\text{ if $n=3$;}\\
\frac{\lceil n/2\rceil+1}{\lceil n/2\rceil}, &\text{ if $n\geq 4$.}
\end{cases}
\]
The values of the weak and intermediate circular repetition thresholds are only known exactly for $n=2$ and $n=3$:
\begin{itemize}
\item $\CRTW(2)=2$ (Thue~\cite{Berstel1995});
\item $\CRTI(2)=7/3$ (Aberkane and Currie~\cite{AberkaneCurrie2005}, Shur~\cite{Shur2011}); and
\item $\CRTI(3)=\CRTW(3)=7/4$ (Shur~\cite{Shur2011}, Currie et al.~\cite{CurrieMolRampersad2018}).
\end{itemize}

From (\ref{Hierarchy}) and the known values of $\RT(n)$ and $\CRTS(n)$, we have
\[
\frac{n}{n-1} \leq \CRTW(n)\leq \CRTI(n)\leq \frac{\lceil n/2\rceil+1}{\lceil n/2\rceil}
\]
for every $n\geq 5$. These are currently the best known bounds on both $\CRTW(n)$ and $\CRTI(n)$ when $n\geq 5$.  Currie and the present authors~\cite{CurrieMolRampersad2018} recently made the following conjecture, which strengthens the second statement of an older conjecture of Shur~\cite[Conjecture~1]{Shur2011}.

\begin{conjecture}\label{conjecture}
For every $n\geq 4$, we have $\CRTI(n)=\CRTW(n)=\RT(n)$.
\end{conjecture}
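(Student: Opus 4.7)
The plan is to focus on proving $\CRTW(n)\leq \RT(n)$ for $n$ large, since the reverse inequality from \eqref{Hierarchy} is already in hand and this accounts for the bulk of the conjecture. For $n\geq 5$, this means producing, for every $\epsilon>0$, arbitrarily long circular words over $A_n$ all of whose factors have exponent at most $n/(n-1)+\epsilon$; equivalently, arbitrarily long $(n/(n-1))^+$-free circular words. The stronger claim $\CRTI(n)=\CRTW(n)$, which asks for such a word of \emph{every} sufficiently large length, I would attack only afterwards via a separate lengthening step.

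The first structural observation I would use is that $\circular{w}$ is $(n/(n-1))^+$-free precisely when the linear word $ww$ has no factor of length at most $|w|$ and exponent exceeding $n/(n-1)$. The factors sitting inside a single copy of $w$ are handled the moment $w$ itself is $(n/(n-1))^+$-free as a linear word, so the task reduces to ruling out high-exponent factors straddling the seam. In other words, I need a long $(n/(n-1))^+$-free linear word $w$ whose long suffixes and long prefixes cannot be aligned to create a new high-exponent factor.

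My approach for producing such $w$ is morphic, modelled on Carpi's construction~\cite{Carpi2007}, which already supplies infinite $(n/(n-1))^+$-free words over $A_n$ for all $n$ above a threshold. The aim is to establish a \emph{circular lifting lemma}: for a suitable Carpi-style uniform morphism $h_n$ and every circular word $\circular{t}$ over the smaller auxiliary alphabet that avoids a prescribed finite list of short circular patterns, the image $\circular{h_n(t)}$ is $(n/(n-1))^+$-free. One then supplies arbitrarily long seeds $\circular{t}$ either by a direct combinatorial argument on the auxiliary alphabet or by recursion, the point being that the required pattern-avoidance condition on $\circular{t}$ is much weaker than Dejean-freeness.

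The main obstacle lies at the seam of $\circular{h_n(t)}$: any would-be factor of exponent greater than $n/(n-1)$ crossing the boundary must be pulled back through $h_n$ to a structured factor of $\circular{t}$, but $h_n$ is not automatically synchronizing across the seam, so this preimage need not correspond to an actual factor of $\circular{t}$. Extracting a clean, verifiable combinatorial condition on $\circular{t}$ from this seam analysis, while keeping the forbidden list finite, is the delicate heart of the proof and is almost certainly the source of the lower bound on $n$ (such as the value $n\geq 45$ stated in the abstract). Finally, closing the remaining small cases, and strengthening $\CRTW$ to $\CRTI$, will likely demand case-by-case constructions or backtracking searches, which I would postpone until the morphic framework for large $n$ is in place.
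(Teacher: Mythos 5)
The statement you are addressing is a \emph{conjecture}, and your proposal does not prove it: by your own account you only outline a program for showing $\CRTW(n)\leq\RT(n)$ for large $n$, postponing both the small alphabets $4\leq n\leq 44$ and the entire $\CRTI$ half of the statement. That is exactly the scope of what the paper actually establishes (Theorem~\ref{main}: $\CRTW(n)=\RT(n)$ for $n\geq 45$), so even if your sketch were filled in completely it would yield the paper's weak version, not the conjecture, which remains open. A proof attempt for this statement therefore has an unavoidable gap at the level of scope, independent of the details.

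Even judged as a sketch of the large-$n$ case, the delicate part you flag --- the seam analysis --- is left open and is mischaracterized in a way that would not work as stated. The paper does not reduce circularity of $\circular{h_n(t)}$ to avoidance of a \emph{finite} list of short circular patterns in the seed. Instead, the seam is tamed by an algebraic condition: one insists that the seed $w$ lie in $\ker(\psi_n)$, so that $f_n(w)\in\ker(\varphi_n)$, and then (Lemma~\ref{CircularCarpiReform}) every conjugate of $\gamma_n(f_n(w))$ is, up to a permutation of letters, the $\gamma_n$-image of the corresponding conjugate of $f_n(w)$; this is what lets factors crossing the seam be pulled back. The remaining condition on the seed is absence of $\psi_n$-kernel repetitions in $\circular{w}$ --- these are unboundedly long, period-constrained factors, not a finite pattern list --- and verifying it requires the explicit arithmetic construction of Proposition~\ref{LongPsi} (the set $\Lambda_t$, the Toeplitz-type word $\beta$, Lemma~\ref{CarpiBeta}, and a case analysis handling repetitions that wrap around the circle), together with the ``proper factor'' strengthening of Carpi's Proposition~\ref{CarpiKernel} applied to circumnavigations. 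None of this machinery is supplied or replaced by an alternative in your proposal, so the ``delicate heart of the proof'' you identify is precisely what is missing.
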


Here, we prove a weak version (pun intended) of Conjecture~\ref{conjecture} for all but finitely many values of $n$.

\begin{theorem}\label{main}
For every $n\geq 45$, we have $\CRTW(n)=\RT(n)=n/(n-1)$.
\end{theorem}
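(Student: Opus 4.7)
The inequality $\CRTW(n) \geq \RT(n) = n/(n-1)$ is immediate from (\ref{Hierarchy}) combined with Dejean's theorem, so the task reduces to the upper bound: for each $n \geq 45$ it suffices to construct an infinite sequence of $(n/(n-1))^+$-free circular words over $A_n$ whose lengths tend to infinity. (Avoiding every exponent $r > n/(n-1)$ then follows automatically from the definition of $\CRTW$.)

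My plan is to start from Carpi's morphic construction of an infinite $(n/(n-1))^+$-free word $\mathbf{x}_n$ over $A_n$, which is available for every $n$ well below $45$, and to produce the desired circular words as cyclic closures of carefully selected factors. Any factor $w$ of $\mathbf{x}_n$ is already linearly $(n/(n-1))^+$-free, so the only possible obstructions to $\circular{w}$ being $(n/(n-1))^+$-free are the \emph{wrap-around factors}, i.e., factors of the form $w[i\ldots |w|]\,w[1\ldots j]$ with $i>1$ and $j<|w|$. I would exploit Carpi's substitution $f$, whose defining property is that it preserves $(n/(n-1))^+$-freeness of linear words via a desubstitution/recognition lemma. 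A natural candidate sequence is $\circular{f^k(u)}$ for a short seed $u$: by the recognition property, any wrap-around factor of $\circular{f^k(u)}$ either desubstitutes to a wrap-around factor of $\circular{f^{k-1}(u)}$ (of strictly smaller seed-level length, handled by induction on $k$) or else sits essentially inside a single image block $f(a)$ and is controlled by linear $(n/(n-1))^+$-freeness already established by Carpi. Induction on $k$ would then reduce the problem to a finite base case verifiable either by an explicit construction or by an exhaustive search for each relevant $n$.

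The main obstacle is wrap-around repetitions whose period is short relative to their length, since Carpi's desubstitution machinery is naturally suited to repetitions wholly contained within a factor, not straddling the seam of a circular word. To handle these, I would combine a Fine--Wilf style analysis -- showing that a short-period wrap-around repetition of exponent exceeding $n/(n-1)$ must propagate into a long periodic factor of $\mathbf{x}_n$ itself, contradicting Dejean's theorem -- with a case analysis of how $f$ acts on the conjugates of $u$ that meet the seam. The hypothesis $n \geq 45$ almost certainly reflects the precise threshold at which Carpi's construction and the additional combinatorial slack needed to control wrap-around repetitions simultaneously hold; pinning down the constant $45$ likely requires quantitative tracking through Carpi's argument together with the existence of a seed $u$ of appropriate length and kernel structure, the latter perhaps verified ad hoc over a short range of small $n$ and uniformly for larger $n$.
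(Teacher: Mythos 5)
Your reduction to the upper bound is correct, and the observation that only wrap-around (seam-straddling) factors can spoil the circular freeness of a factor of a linear Dejean word is also correct. But the proposed mechanism for handling them rests on a mischaracterization of Carpi's construction, and this is a genuine gap. Carpi does not build his $n/(n-1)^+$-free words as fixed points of a substitution over $A_n$ with a recognition/desubstitution lemma preserving Dejean freeness. His words have the form $\gamma_n(f_n(w))$, where $f_n:A_m^*\rightarrow B^*$ maps into the \emph{binary Pansiot code} and $\gamma_n$ is not a morphism but a decoding that depends on the running permutation $\varphi_n(b_1\cdots b_i)\in\mathbb{S}_n$. Two consequences defeat your plan. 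First, the dangerous repetitions at exponent just above $n/(n-1)$ are \emph{kernel repetitions}: their defining condition is membership of a length-$p$ factor in $\ker(\varphi_n)$ (resp.\ $\ker(\psi_n)$) together with a weak inequality between length and period, and their periods can be a constant fraction of the entire word. They neither ``sit inside a single image block of $f$'' nor yield a long periodic factor that a Fine--Wilf argument could contradict via Dejean's theorem, so the induction-on-$k$ desubstitution and the Fine--Wilf step both fail exactly on the factors you need to control. Second, for the circular word to decode coherently at all, the permutation of the whole code must be trivial: one needs $f_n(w)\in\ker(\varphi_n)$, i.e.\ $w\in\ker(\psi_n)$, so that conjugates of the code correspond (up to renaming letters) to conjugates of the decoded word. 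Your proposal never imposes this monodromy condition, and without it the wrap-around factors of $\circular{\gamma_n(f_n(w))}$ are not governed by the circular structure of the code in any usable way.

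What the paper actually does, and what is missing from your sketch, is the construction itself: for each $t$ it builds an explicit word $w\in A_m^*\cap\ker(\psi_n)$ of length $4^{m-2}t$ (interleaving a Toeplitz-type word $\beta$, its image under a letter-renaming, a short padding block forcing all letter counts to be divisible by $4$, and a base-$4$ ruler sequence at positions divisible by $4$) such that the \emph{circular} word $\circular{w}$ contains no $\psi_n$-kernel repetitions; circularized analogues of Carpi's Propositions 3.2 and 8.2 then transfer this to $\circular{\gamma_n(f_n(w))}$. The bound $n\geq 45$ is not a byproduct of re-tracking Carpi's constants, as you guessed, but arises from needing $m\geq 7$ so that $4^{m-2}>36m+45$ in the analysis ruling out short-period circular kernel repetitions. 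Your proposal would need an analogue of this entire kernel-level circular construction; as written it reduces the hard part of the theorem to machinery that does not exist in Carpi's framework.
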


The layout of the remainder of the paper is as follows.  In Section~\ref{Review}, we summarize the work of Carpi~\cite{Carpi2007} in confirming all but finitely many cases of Dejean's conjecture.  In Section~\ref{Construction}, we establish Theorem~\ref{main} with a construction that relies heavily on the work of Carpi.  We conclude with a discussion of some related notions of repetition threshold for classes of graphs.

We note that the work of Carpi~\cite{Carpi2007} that we rely on in this paper was also instrumental in the recent proof by Currie and the present authors~\cite{CurrieMolRampersad2019} that for every $n\geq 27$, the number of $n/(n-1)^+$-free words of length $k$ over $n$ letters grows exponentially in $k$.  This speaks to the strength of Carpi's results.

\section{Carpi's reduction to \texorpdfstring{\boldmath{$\psi_n$}}{psi}-kernel repetitions}\label{Review}

In this section, let $n\geq 2$ be a fixed integer.  Pansiot~\cite{Pansiot1984} was first to observe that if a word over the alphabet $A_n$ is $(n-1)/(n-2)$-free, then it can be encoded by a word over the binary alphabet $B=\{\tt{0},\tt{1}\}$.  For consistency, we use the notation of Carpi~\cite{Carpi2007} to describe this encoding.  Let $\mathbb{S}_n$ denote the symmetric group on $A_n$, and define the morphism $\varphi_n:B^*\rightarrow \mathbb{S}_n$ by
\begin{align*}
\varphi_n(\tt{0})&=\begin{pmatrix}
\tt{1} & \tt{2} & \cdots & \tt{n-1}
\end{pmatrix}; \text{ and}\\
\varphi_n(\tt{1})&=\begin{pmatrix}
\tt{1} & \tt{2} & \cdots & \tt{n}
\end{pmatrix}.
\end{align*}
Now define the map $\gamma_n:B^*\rightarrow A_n^*$ by
\[
\gamma_n(b_1b_2\cdots b_k)=a_1a_2\cdots a_k,
\]
where 
\[
a_i\varphi_n(b_1b_2\cdots b_i)=\tt{1}
\]
for all $1\leq i\leq k$.  To be precise, Pansiot proved that if a word $\alpha\in A_n^*$ is $(n-1)/(n-2)$-free, then $\alpha$ can be obtained from a word of the form $\gamma_n(u)$, where $u\in B^*$, by renaming the letters.

Let $u\in B^*$, and let $\alpha=\gamma_n(u)$.  Pansiot showed that if $\alpha$ has a factor of exponent greater than $n/(n-1)$, then either the word $\alpha$ itself contains a \emph{short repetition}, or the binary word $u$ contains a \emph{kernel repetition} (see~\cite{Pansiot1984} for details).  Carpi reformulated this statement so that both types of forbidden factors appear in the binary word $u$.  Let $k\in\{1,2,\dots,n-1\}$, and let $v\in B^+$.  Then $v$ is called a \emph{$k$-stabilizing word} (of order $n$) if $\varphi_n(v)$ fixes the points $\tt{1},\tt{2},\dots,\tt{k}$.  Let $\Stab_n(k)$ denote the set of $k$-stabilizing words of order $n$.  The word $v$ is called a \emph{kernel repetition} (of order $n$) if it has period $p$ and a factor $v'$ of length $p$ such that $v'\in \ker(\varphi_n)$ and $|v|>\frac{np}{n-1}-(n-1)$.
Carpi's reformulation of Pansiot's result is the following.

\begin{proposition}[Carpi~{\cite[Proposition~3.2]{Carpi2007}}]
\label{CarpiReform}
Let $u\in B^*$.  If a factor of $\gamma_n(u)$ has exponent larger than $n/(n-1)$, then $u$ has a factor $v$ satisfying one of the following conditions:
\begin{enumerate}
    \item $v\in \Stab_n(k)$ and $0<|v|<k(n-1)$ for some $1\leq k\leq n-1$; or
    \item $v$ is a kernel repetition of order $n$.
\end{enumerate}
\end{proposition}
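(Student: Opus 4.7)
The plan is to translate a high-exponent repetition in $\gamma_n(u)$ into a condition on the binary word $u$, then to dichotomize based on how many letters of $A_n$ the associated permutations must fix. Suppose $w = a_{j+1}\cdots a_{j+\ell}$ is a factor of $\gamma_n(u)$ with period $p$ and length $\ell > np/(n-1)$, so that $\ell - p > p/(n-1)$. Unpacking the definition of $\gamma_n$, namely $a_t = \varphi_n(b_1\cdots b_t)^{-1}(\tt{1})$, the periodicity relation $a_{j+s+p}=a_{j+s}$ (valid for $1 \le s \le \ell - p$) is equivalent to the block permutation $\tau_s := \varphi_n(b_{j+s+1}\cdots b_{j+s+p})$ fixing the letter $a_{j+s}$; this is the core translation.

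The key observation is then that as $s$ varies, the length-$p$ factor of $u$ realizing $\tau_s$ shifts by one position at a time, while the forced fixed point ranges through the sequence $a_{j+1}, a_{j+2}, \ldots$. Let $k$ be the number of distinct letters encountered over the range $1 \le s \le \ell - p + 1$. Two cases arise. If $k \le n-1$ and $p < k(n-1)$, then the factor of $u$ that realizes the stabilization is a factor $v$ of $u$ lying in $\Stab_n(k)$ with $0 < |v| < k(n-1)$, which is case~(i). Otherwise $p \ge k(n-1)$ or $k = n$, in which case a pigeonhole argument on the orbits of $\varphi_n(\tt{0})$ (an $(n-1)$-cycle fixing $\tt{n}$) and $\varphi_n(\tt{1})$ (an $n$-cycle on $A_n$) forces a length-$p$ factor $v'$ of $u$ to lie in $\ker(\varphi_n)$. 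Taking $v$ to be the factor of $u$ of length approximately $\ell$ whose image under $\gamma_n$ carries $w$, the bound $\ell > np/(n-1)$ translates, modulo a boundary discrepancy of at most $n - 1$, into $|v| > np/(n-1) - (n-1)$, and together with the kernel factor $v'$ this gives a kernel repetition, case~(ii).

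The main obstacle is the careful quantitative accounting ensuring that the two thresholds --- $p < k(n-1)$ for case~(i) and $|v| > np/(n-1) - (n-1)$ for case~(ii) --- mesh exactly, so that no factor of $\gamma_n(u)$ with exponent larger than $n/(n-1)$ can evade both conclusions. The common constant $n - 1$ in both bounds reflects the orbit lengths of the generators $\varphi_n(\tt{0})$ and $\varphi_n(\tt{1})$: these orbit lengths govern both how many new letters the sequence $a_{j+1},a_{j+2},\ldots$ can introduce before cycling back, and how long a block of $b$'s must be before its $\varphi_n$-image is forced into a specified stabilizer or into the kernel. The substantive combinatorial work is Pansiot's; Carpi's contribution, which this sketch follows, is to recast the dichotomy so that both obstructions appear uniformly as factors of the binary word $u$, which is the form needed for the subsequent repetition-avoidance analysis in Section~\ref{Construction}.
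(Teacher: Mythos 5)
This proposition is not proved in the paper at all: it is quoted from Carpi \cite[Proposition~3.2]{Carpi2007} (and ultimately rests on Pansiot's analysis), so your sketch has to be judged against that argument. Your opening translation is fine: with your left-action convention, $a_{t+p}=a_t$ is equivalent to $\varphi_n(b_{t+1}\cdots b_{t+p})$ fixing $a_t$ (with the paper's right-action convention the same computation says each such window fixes $\tt{1}$); either way, each shifted window is forced to fix only \emph{one} point. From there, however, the essential content is asserted rather than proved, and one assertion is wrong. Membership in $\Stab_n(k)$ requires that a \emph{single} factor $v$ have $\varphi_n(v)$ fixing the specific points $\tt{1},\dots,\tt{k}$, whereas your argument produces a different window for each shift $s$, each fixing one arbitrary letter $a_{j+s}$. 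The heart of the proof is the step that transfers all these constraints onto one window: since both generators act as $x\mapsto x+1$ on $\{\tt{1},\dots,\tt{n-2}\}$, any $n-1$ consecutive letters of $\gamma_n(u)$ are distinct and are carried by the running permutation onto $\tt{1},\dots,\tt{n-1}$ in order of recency, and from this one shows that the final length-$p$ window is forced to fix exactly the initial segment $\tt{1},\dots,\tt{k}$ with $k=\min(\ell-p,\,n-1)$. Nothing in your sketch supplies this; ``the factor of $u$ that realizes the stabilization'' is never identified, and without the transport argument no single factor is shown to lie in $\Stab_n(k)$.

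Your case split is also misaligned. In the branch ``$p\geq k(n-1)$ with $k\leq n-2$'' no pigeonhole argument produces a kernel element: a window is forced into $\ker(\varphi_n)$ by this mechanism only when it must fix $n-1$ points, i.e.\ only when $\ell-p\geq n-1$. The correct resolution of that branch is that it cannot occur: there $k=\ell-p$, and the exponent hypothesis $\ell-p>p/(n-1)$ already gives $p<k(n-1)$. This is precisely the quantitative ``meshing'' you flag as the main obstacle but never carry out. Finally, a kernel repetition requires the binary factor $v$ itself to have period $p$, and periodicity of $\gamma_n(u)$ does not transfer to $u$ for free: one needs Pansiot's observation that $b_{t+1}$ is determined by the preceding $n$ letters of $\gamma_n(u)$ (it is $\tt{1}$ or $\tt{0}$ according to whether $a_{t+1}$ is the letter absent from the previous $n-1$ positions or the oldest letter among them), which yields period $p$ for $u$ only after discarding about $n-1$ initial positions --- this is the actual source of the $-(n-1)$ in the definition of kernel repetition, not merely a ``boundary discrepancy'' in the length count. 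So the proposal has the right overall shape, but the three lemmas that constitute the proof (the transport of fixed points to $\tt{1},\dots,\tt{k}$, the impossibility of the intermediate case, and the transfer of periodicity to $u$) are all missing.
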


Now assume that $n\geq 9$, and define $m=\lfloor (n-3)/6\rfloor$ and $\ell=\lfloor n/2\rfloor$.  Carpi~\cite{Carpi2007} defines an $(n-1)(\ell+1)$-uniform morphism $f_n:A_m^*\rightarrow B^*$ with the following extraordinary property.

\begin{proposition}[Carpi~{\cite[Proposition~7.3]{Carpi2007}}]
\label{CarpiShort}
Suppose that $n\geq 27$, and let $w\in A_m^*$. Then for every $k\in\{1,2,\dots,n-1\}$, the word $f_n(w)$ contains no $k$-stabilizing word of length smaller than $k(n-1)$.
\end{proposition}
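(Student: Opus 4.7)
The plan is to reduce the problem to an orbit-tracking argument for $\varphi_n$ on $\mathbb{S}_n$. Any factor $v$ of $f_n(w)$ has a canonical block decomposition
\[
v = s \cdot f_n(w') \cdot p,
\]
where $w'$ is a (possibly empty) factor of $w$, and $s,p$ are a proper suffix and a proper prefix of $f_n(a_0), f_n(a_1)$ for some letters $a_0,a_1 \in A_m$. Writing $N = |w'|$, we have $|v| = |s| + N(n-1)(\ell+1) + |p|$ with $|s|, |p| < (n-1)(\ell+1)$, and since $\varphi_n$ is a morphism, $\varphi_n(v) = \varphi_n(s)\,\varphi_n(f_n(w'))\,\varphi_n(p)$. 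Thus ``$v$ is $k$-stabilizing'' means this product lies in the pointwise stabilizer $\Stab_n(k)$ of $\{\tt{1},\ldots,\tt{k}\}$ inside $\mathbb{S}_n$.

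The main step is to exploit the particular choice of $f_n$. Each block permutation $\pi_a := \varphi_n(f_n(a))$, $a \in A_m$, belongs to a specific small subset of $\mathbb{S}_n$ dictated by Carpi's construction; I would first compute these permutations explicitly. The goal is to show, by tracking the orbit of each low point $\tt{i}$ ($1 \leq i \leq k$) under the partial products $\pi_{a_1}\pi_{a_2}\cdots\pi_{a_j}$, that no product of fewer than roughly $k$ block permutations can lie in $\Stab_n(k)$. Equivalently, each of $\tt{1}, \ldots, \tt{k}$ follows a long trajectory under the $\pi_{a_i}$ and cannot return to its starting point without crossing many block boundaries.

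The boundary pieces $s$ and $p$ are then handled by absorbing them into a coset correction: if $\varphi_n(v) \in \Stab_n(k)$ then $\varphi_n(f_n(w'))$ lies in the double coset $\varphi_n(s)^{-1}\, \Stab_n(k)\, \varphi_n(p)^{-1}$, and since $s$ and $p$ are proper pieces of $f_n$-images, $\varphi_n(s)$ and $\varphi_n(p)$ each range over a \emph{finite, explicit} family of prefix/suffix permutations that can be enumerated. Running the orbit argument on $\varphi_n(f_n(w'))$ against each possible coset yields $N \geq ck$ for an explicit constant $c$ built from the construction, and combining with the length formula $|v| \geq N(n-1)(\ell+1)$ delivers $|v| \geq k(n-1)$.

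The main obstacle will be tightness of the constants: we need exactly $|v| \geq k(n-1)$, not merely a polynomial bound, which demands careful bookkeeping over all pairs of prefix/suffix permutations $(\varphi_n(s), \varphi_n(p))$. The hypothesis $n \geq 27$ is presumably where the parameters $m = \lfloor (n-3)/6 \rfloor$ and $\ell = \lfloor n/2 \rfloor$ first become large enough for the orbit-counting to close with the correct constant; for smaller $n$ there is simply not enough room inside a single block of length $(n-1)(\ell+1)$ to accommodate the required stabilizing behaviour, and the boundary corrections swamp the main term.
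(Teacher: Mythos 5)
This statement is not proved in the paper at all: it is quoted verbatim from Carpi (Proposition~7.3 of \cite{Carpi2007}), with the strengthening from $n\geq 30$ to $n\geq 27$ due to Currie and Rampersad \cite{CurrieRampersad2009Again}, and that strengthening required Carpi's detailed structural lemmas plus a significant computer verification. So the relevant question is whether your sketch could stand as an independent proof, and as written it cannot: every step that carries actual content is deferred. You never write down $f_n$ or the block permutations $\pi_a=\varphi_n(f_n(a))$, and the entire difficulty of the proposition lives precisely in the internal structure of the images $f_n(a)$, which your argument never touches.

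More concretely, the scale of your block-counting strategy is wrong for this statement. The blocks have length $(n-1)(\ell+1)$ with $\ell=\lfloor n/2\rfloor$, while the forbidden lengths are $|v|<k(n-1)\leq (n-1)^2<2(n-1)(\ell+1)$. Hence any candidate $k$-stabilizing factor of length smaller than $k(n-1)$ meets at most two or three blocks, and for small $k$ (e.g.\ $k=1$, where one must exclude $1$-stabilizing factors of length less than $n-1$) it lies entirely inside a single block or a single junction, so $N=0$ or $N=1$. In that dominant case your orbit argument on $\varphi_n(f_n(w'))$ is vacuous and everything is pushed into the ``coset correction'' for the prefix/suffix pieces $s,p$, which you propose to handle by enumerating a ``finite, explicit family'' of permutations. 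That family grows with $n$ and is not uniformly enumerable; controlling it is exactly what Carpi's Sections~5--7 do by a fine combinatorial analysis of how $\varphi_n$ moves the points $\tt{1},\dots,\tt{k}$ along short factors of the specific words used to build $f_n$, and it is also where the computer check enters for $27\leq n\leq 29$. Your closing guess about the role of the hypothesis $n\geq 27$ is likewise off: Carpi's computation-free argument needs $n\geq 30$, and $27$ is the boundary of the machine-verified range, not a point where an orbit count ``closes.'' As it stands, the proposal is a plausible framing of the problem (the decomposition $v=s\,f_n(w')\,p$ is indeed how one begins), but it contains no mechanism that would yield the bound $|v|\geq k(n-1)$ in the short-factor regime where the statement has its force.
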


We note that Proposition~\ref{CarpiShort} was proven by Carpi~\cite{Carpi2007} in the case that $n\geq 30$ in a computation-free manner.  The improvement to $n\geq 27$ stated here was achieved later by Currie and the second author~\cite{CurrieRampersad2009Again}, using lemmas of Carpi~\cite{Carpi2007} along with a significant computer check.

Proposition~\ref{CarpiShort} says that for \emph{every} word $w\in A_m^*$, no factor of $f_n(w)$ satisfies condition (i) of Proposition~\ref{CarpiReform}. 
Thus, we need only worry about factors satisfying condition (ii) of Proposition~\ref{CarpiReform}, i.e., kernel repetitions.  To this end, define the morphism $\psi_n:A_m^*\rightarrow \mathbb{S}_n$ by $\psi_n(v)=\varphi_n(f_n(v))$ for all $v\in A_m^*$.  A word $v\in A_m^*$ is called a \emph{$\psi_n$-kernel repetition} if it has a period $q$ and a factor $v'$ of length $q$ such that $v'\in \ker(\psi_n)$ and $(n-1)(|v|+1)\geq nq-3$.  Carpi established the following result.

\begin{proposition}[Carpi~{\cite[Proposition~8.2]{Carpi2007}}]
\label{CarpiKernel}
Let $w\in A_m^*$.  If a factor of $f_n(w)$ is a kernel repetition, then a proper factor of $w$ is a $\psi_n$-kernel repetition.
\end{proposition}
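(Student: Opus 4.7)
The plan is to transfer a kernel repetition at the binary level to a $\psi_n$-kernel repetition at the $A_m$ level, exploiting the $h$-uniformity of $f_n$ with $h:=(n-1)(\ell+1)$. Let $v$ be a factor of $f_n(w)$ that is a kernel repetition of order $n$, so $v$ has period $p$ and a length-$p$ factor $v'\in\ker(\varphi_n)$, with $|v|>np/(n-1)-(n-1)$. Write $w=w_1w_2\cdots w_N$ and let $a\leq b$ be the unique indices such that $v$ is a factor of $f_n(w_a\cdots w_b)$ but of neither $f_n(w_{a+1}\cdots w_b)$ nor $f_n(w_a\cdots w_{b-1})$. Setting $u:=w_aw_{a+1}\cdots w_b$, there exist $s_1,s_2\in\{0,1,\dots,h-1\}$ with $f_n(u)=xvy$, $|x|=s_1$, $|y|=s_2$, and in particular $|u|h=|v|+s_1+s_2$.

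Next I would pull the period $p$ back to an $A_m$-level period $q:=\lceil p/h\rceil$ and locate the corresponding kernel factor in $u$. Because $v$ has period $p$, two copies of $v'$ inside $v$ are separated by exactly $p$ positions; comparing their alignments with the $f_n$-block structure of $f_n(u)$ forces a block-level compatibility that yields an index $c$ and a factor $u':=u_cu_{c+1}\cdots u_{c+q-1}$ of $u$ satisfying $\psi_n(u')=\varphi_n(f_n(u'))=\mathrm{id}$. The same comparison lifts the periodicity of $v$ to period $q$ on $u$ over the relevant span.

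The remaining task is the length condition $(n-1)(|u|+1)\geq nq-3$. From $|u|\geq|v|/h$ and $|v|>np/(n-1)-(n-1)$ we get $(n-1)|u|>np/h-(n-1)^2/h$, while $nq\leq np/h+n$. The inequality then reduces to $(n-1)^2/h\leq 2$, which holds because $h=(n-1)(\ell+1)\geq (n-1)\cdot n/2\geq (n-1)^2/2$ for $n\geq 9$. To conclude that a \emph{proper} factor of $w$ is a $\psi_n$-kernel repetition, I would rule out $u=w$ using the slack in this computation: if $a=1$ and $b=N$, the margin allows one to shorten $u$ by one letter at an end away from $u'$ while preserving both the period and the length inequality.

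I expect the main obstacle to be the synchronization step when $h\nmid p$: one must show that the two copies of $v'$ inside $v$ sit in compatible positions modulo $h$, so that the pulled-back word $u'$ is genuinely in $\ker(\psi_n)$ rather than in some conjugate stabilizer. This is where the construction of $f_n$ matters; the argument likely invokes structural lemmas from Carpi pinning down how factors of $f_n$-images interact with block boundaries. The length inequality is then tight but robust, thanks to the generous margin provided by the parameters $m$ and $\ell$ in Carpi's construction.
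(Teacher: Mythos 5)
Note first that the paper does not prove this statement at all: it is quoted verbatim from Carpi (his Proposition~8.2), and the only original content here is the Remark that the word ``proper'' is implicit in Carpi's argument. So your proposal is in effect an attempt to reconstruct Carpi's proof. It has the right outer shape (take the minimal cover $u$ of $v$ under the $h$-uniform morphism $f_n$, lift the period, check the length inequality), but the step you explicitly defer --- producing a block-aligned factor $u'$ with $\psi_n(u')=\mathrm{id}$ together with a genuine period $q$ at the $A_m$ level --- is the entire content of the proposition, not a technical afterthought. With $q=\lceil p/h\rceil$ and $h\nmid p$ there is no reason whatsoever for a block-aligned factor of length $qh$ to lie in $\ker(\varphi_n)$, nor for the block sequence of $u$ to have period $q$: binary-level periodicity of $v$ says nothing about the letter sequence of $u$ unless the period respects block boundaries. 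The workable route is to show that (for the relevant occurrences) $p$ is in fact forced to be compatible with the block length $h$, which is exactly where the structural properties of the code $\{f_n(a):a\in A_m\}$ from Carpi's construction must enter; once that is done, normality of $\ker(\varphi_n)$ lets one replace $v'$ by a block-aligned cyclic shift and pull it back to $\ker(\psi_n)$. None of this is in your sketch, and short periods $p$ need separate treatment since the alignment argument requires $v$ to contain a full block repeated at distance $p$.

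There is also a quantitative problem in the part you did write out. Your inequality uses $|u|\geq |v|/h$ for the \emph{whole} minimal cover $u$, i.e.\ it assumes the lifted period $q$ holds on $u$ including its first and last letters, whose $f_n$-blocks are only partially covered by $v$; those letters are not determined by the periodicity of $v$, and if even one of them must be discarded the left-hand side $(n-1)(|u|+1)$ drops by $n-1\geq 26$, while the slack you computed is $2-(n-1)^2/h=2-(n-1)/(\ell+1)<1$. The same arithmetic sinks your treatment of ``proper'': shortening $u$ by a letter ``using the margin'' is not affordable, since the margin is a small fraction of one letter. (In the paper, properness is obtained by inspecting the structure of Carpi's proof, not from numerical slack.) So both the synchronization step and the boundary/properness bookkeeping require genuine arguments before this becomes a proof.
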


\begin{remark}
The word ``proper'' in the statement of Proposition~\ref{CarpiKernel} was not in the original statement of Carpi~\cite[Proposition~8.2]{Carpi2007}, but it is easily verified that Carpi's proof actually proves this stronger statement, which will be necessary for our work.
\end{remark}

In other words, if no proper factor of $w\in A_m^*$ is a $\psi_n$-kernel repetition, then no factor of $f_n(w)$ satisfies condition (ii) of Proposition~\ref{CarpiReform}.  Finally, we note that the morphism $f_n$ is defined in such a way that  the kernel of $\psi_n$ has a very simple structure.

\begin{lemma}[Carpi~{\cite[Lemma~9.1]{Carpi2007}}]
\label{Carpi4}
If $v\in A_m^*$, then $v\in \ker(\psi_n)$ if and only if $4$ divides $|v|_a$ for every letter $a\in A_m$.
\end{lemma}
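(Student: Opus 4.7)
The plan is to reduce the lemma to an algebraic fact about the permutations $\sigma_a:=\psi_n(a)\in\mathbb{S}_n$ for $a\in A_m$, namely that they pairwise commute and generate a subgroup of $\mathbb{S}_n$ isomorphic to $(\mathbb{Z}/4\mathbb{Z})^m$. Given this, since $\psi_n$ is a morphism and the $\sigma_a$ commute, we can rearrange factors freely to obtain
\[
\psi_n(v)=\prod_{a\in A_m}\sigma_a^{|v|_a},
\]
and the isomorphism with $(\mathbb{Z}/4\mathbb{Z})^m$ then forces $\psi_n(v)=\mathrm{id}$ if and only if $4\mid|v|_a$ for every letter $a$, which is the lemma.

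First, I would unfold $\psi_n=\varphi_n\circ f_n$ and, using Carpi's explicit construction of the $(n-1)(\ell+1)$-uniform morphism $f_n$ from~\cite{Carpi2007}, compute $\sigma_a$ concretely for each $a\in A_m$. This amounts to multiplying out the prescribed sequence of cycles $\varphi_n(\tt{0})=(\tt{1}\ \tt{2}\ \cdots\ \tt{n-1})$ and $\varphi_n(\tt{1})=(\tt{1}\ \tt{2}\ \cdots\ \tt{n})$ in the order dictated by the binary word $f_n(a)$. The goal would be to obtain a clean normal form for $\sigma_a$, ideally expressing it as a product of a small number of disjoint $4$-cycles whose supports depend only on $a$, and are pairwise disjoint across different letters $a\in A_m$.

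Second, with such a description in hand, I would verify directly that
\begin{enumerate}
\item each $\sigma_a$ has order exactly $4$;
\item $\sigma_a\sigma_b=\sigma_b\sigma_a$ for all $a,b\in A_m$; and
\item the homomorphism $(\mathbb{Z}/4\mathbb{Z})^m\to\mathbb{S}_n$ sending the $a$-th standard generator to $\sigma_a$ is injective.
\end{enumerate}
Properties (i) and (ii) together yield the backward direction of the lemma (products of commuting elements of order dividing $4$ trivialize when each exponent is a multiple of $4$), while (iii) yields the forward direction (a relation $\prod_a\sigma_a^{k_a}=\mathrm{id}$ forces $k_a\equiv0\pmod 4$ for every $a$).

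The main obstacle is that a product of many long cycles in $\mathbb{S}_n$ can a priori be essentially any permutation, so the fact that the $\sigma_a$ share the same small order and pairwise commute is a genuine structural feature of Carpi's morphism $f_n$, not a general phenomenon. The real work is therefore in the first step: making the action of $\sigma_a$ on $\{\tt{1},\ldots,\tt{n}\}$ transparent enough to read off (i)--(iii). If one can show that the supports of the $\sigma_a$ are pairwise disjoint subsets of $\{\tt{1},\ldots,\tt{n}\}$, then (ii) and (iii) follow automatically, and only (i) demands genuine computation; this is the form of reduction I would aim for, mirroring the approach of Carpi's Section~9.
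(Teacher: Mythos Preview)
The paper does not prove this lemma at all; it simply quotes it as \cite[Lemma~9.1]{Carpi2007} and moves on. Your plan is exactly the right one and is essentially Carpi's own argument: in his construction the permutations $\sigma_a=\psi_n(a)$ are explicitly computed to be products of disjoint $4$-cycles whose supports are pairwise disjoint across $a\in A_m$, from which (i)--(iii) and hence the lemma follow immediately. The only caveat is that your proposal is a plan rather than a proof---the substantive step you correctly identify as ``the real work'' (unwinding $f_n$ to get a clean normal form for $\sigma_a$) is precisely what Carpi carries out, and there is no shortcut that avoids his explicit description of $f_n$.
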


\section{Constructing \texorpdfstring{\boldmath{$n/(n-1)^+$}}{n/(n-1)+}-free circular words over \texorpdfstring{$n$}{n} letters}\label{Construction}

In this section, let $n\geq 27$ be a fixed integer, and let $m=\lfloor (n-3)/6\rfloor$ and $\ell=\lfloor n/2\rfloor$, as in the previous section.  Finally, define $M=4^{m-2}$.  Since $n\geq 27$, we have $m\geq 4$ and $M\geq 16$.

In order to prove Theorem~\ref{main}, we will construct an $n/(n-1)^+$-free circular word of length $M(n-1)(\ell+1)t$ over $A_n$ for every integer $t\geq 1$.  We first show that we can restrict our attention to words over the smaller alphabet $A_m$, just as Carpi did for linear words.  We begin with an analogue of Proposition~\ref{CarpiReform} for circular words.

\begin{lemma}  \label{CircularCarpiReform}
Let $u\in B^*\cap \ker(\varphi_n)$.  If a factor of the circular word $\circular{\gamma_n(u)}$ has exponent larger than $n/(n-1)$, then the circular word $\circular{u}$ has a factor $v$ satisfying one of the following conditions:
\begin{enumerate}
    \item $v\in \Stab_n(k)$ and $0<|v|<k(n-1)$ for some $1\leq k\leq n-1$; or
    \item $v$ is a kernel repetition of order $n$.
\end{enumerate} 
\end{lemma}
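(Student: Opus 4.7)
The plan is to reduce the circular statement to the linear Proposition~\ref{CarpiReform} by realizing each linear conjugate of $\gamma_n(u)$ as essentially $\gamma_n(u')$ for a suitable conjugate $u'$ of $u$. Once this identification is in place, applying the existing linear proposition to $u'$ produces the required factor inside $\circular{u}$.

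The key fact is that $\ker(\varphi_n)$ is closed under cyclic conjugation of words, since in any group one has $ab=e \Leftrightarrow ba=e$; in particular, every conjugate $u'$ of $u$ lies in $\ker(\varphi_n)$, so $\gamma_n(u')$ is defined. A direct unfolding of the defining relation $a_i\varphi_n(b_1\cdots b_i)=\tt{1}$, together with the identity $\varphi_n(y)=\varphi_n(x)^{-1}$ available for $u=xy\in\ker(\varphi_n)$, shows moreover that $\gamma_n(yx)$ is obtained from the cyclic $|x|$-rotation of $\gamma_n(xy)$ by applying the single letter permutation $\pi := \varphi_n(x)$ to every position. Since letter renaming preserves periods and exponents, factors of the rotated word correspond bijectively, with the same exponent, to factors of $\gamma_n(u')$.

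With this compatibility established, the argument is short. Suppose $\circular{\gamma_n(u)}$ has a factor $\beta$ of exponent greater than $n/(n-1)$, and fix a linear conjugate of $\gamma_n(u)$ containing $\beta$. Let $u'$ be the corresponding conjugate of $u$ and $\pi$ the associated permutation, so that the image of $\beta$ under $\pi$ (applied letterwise) is a factor of $\gamma_n(u')$ of exponent greater than $n/(n-1)$. Proposition~\ref{CarpiReform} applied to $u' \in B^*$ yields a factor $v$ of $u'$ satisfying condition (i) or condition (ii); both conditions depend only on $\varphi_n(v)$ and the internal period structure of $v$, and so are intrinsic to the binary word $v$. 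Since $u'$ is a conjugate of $u$, the word $v$ is a factor of $\circular{u}$ of the required form.

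The only step demanding any real verification is the compatibility between $\gamma_n$ and word conjugation; but this is a routine index calculation rather than a genuine obstacle, and I do not anticipate difficulty beyond careful bookkeeping.
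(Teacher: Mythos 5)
Your proposal is correct and follows essentially the same route as the paper: both establish that $\gamma_n(u')$ for a conjugate $u'$ of $u$ equals the corresponding conjugate of $\gamma_n(u)$ up to a letterwise permutation (using $u\in\ker(\varphi_n)$ for the wrap-around), and then invoke Proposition~\ref{CarpiReform} on $u'$ to produce the factor $v$ of $\circular{u}$.
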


\begin{proof}
Let $u=u_1u_2\cdots u_s$, where $u_i\in B$ for all $i\in\{1,\ldots,s\}$.  Let $\gamma_n(u)=a_1a_2\ldots a_s$, where $a_i\in A_n$ for all $i\in\{1,\ldots,s\}$.
First, we claim that for every conjugate $u'=u_j\cdots u_su_1\cdots u_{j-1}$ of $u$, the word $\gamma(u')$ is equal, up to a permutation of the letters, to the corresponding conjugate $a_j\cdots a_sa_1\cdots a_{j-1}$ of $\gamma_n(u)$.  It suffices to show that $\gamma_n(u_2u_3\cdots u_su_1)$ is equal to $a_2a_3\cdots a_ka_1$, up to a permutation of the letters.  By definition of $\gamma_n$, we have
\[
a_i\varphi_n(u_1u_2\cdots u_i)=\tt{1}
\]
for all $i\in\{1,\ldots,s\}$.  Let $\gamma_n(u_2u_3\cdots u_su_1)=b_2b_3\cdots b_sb_1$.  Again, by definition of $\gamma_n$, we have
\[
b_i[\varphi_n(u_1)]^{-1}\varphi_n(u_1u_2\ldots u_{i})=\tt{1}
\]
for all $i\in\{2,\ldots,s\}$.  Finally, using the definition of $\gamma_n$ and the fact that $u\in \ker(\varphi_n)$, we have
\[
b_1[\varphi_n(u_1)]^{-1}\varphi_n(u_1)=b_1[\varphi_n(u_1)]^{-1}\varphi_n(u_1u_2\ldots u_su_1)=\tt{1}.
\]
Thus, we see that $b_i=a_i\varphi_n(u_1)$ for all $i\in\{1,\ldots,s\}$, and this completes the proof of the claim.

Suppose now that a factor $w$ of the circular word $\circular{\gamma_n(u)}$ has exponent larger than $n/(n-1)$.  Then, up to permutation of the letters, the word $w$ is a factor of $\gamma_n(u')$ for some conjugate $u'$ of $u$. By Lemma~\ref{CarpiReform}, the word $u'$ contains a factor $v$ satisfying either condition (i) or condition (ii).  Since $u'$ is a conjugate of $u$, the word $v$ is a factor of $\circular{u}$, and this completes the proof.
\end{proof}

The next lemma is an analogue of Proposition~\ref{CarpiKernel} for circular words.

\begin{lemma}\label{PsiSuffices}
Let $w\in A_m^*\cap\ker(\psi_n)$.  If the circular word $\circular{w}$ contains no $\psi_n$-kernel repetitions, then the circular word $\circular{\gamma_n(f_n(w))}$ is $n/(n-1)^+$-free.
\end{lemma}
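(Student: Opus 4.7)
The strategy is to adapt Carpi's linear argument to the circular setting, with Lemma~\ref{CircularCarpiReform} playing the bridging role. First, we observe that the hypothesis $w \in \ker(\psi_n)$ is equivalent to $f_n(w) \in \ker(\varphi_n)$, since $\psi_n = \varphi_n \circ f_n$, which is exactly what is needed to invoke Lemma~\ref{CircularCarpiReform}. Arguing by contraposition, we suppose that $\circular{\gamma_n(f_n(w))}$ contains a factor of exponent greater than $n/(n-1)$; the lemma then produces a factor $v$ of $\circular{f_n(w)}$ satisfying condition~(i) or~(ii), and the task reduces to exhibiting a $\psi_n$-kernel repetition in $\circular{w}$.

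To pass from $\circular{f_n(w)}$ back to $\circular{w}$, write $w = c_1 c_2 \cdots c_s$ and $L = (n-1)(\ell+1)$, so that $f_n$ decomposes $\circular{f_n(w)}$ into the $s$ consecutive blocks $f_n(c_1), \ldots, f_n(c_s)$, each of length $L$. Since $v$ is a contiguous arc on $\circular{f_n(w)}$ of length at most $sL$, it touches a string of consecutive blocks, say those indexed by $c_{i_1}, c_{i_1+1}, \ldots, c_{i_2}$, with indices taken modulo $s$. Setting $x = c_{i_1} c_{i_1+1} \cdots c_{i_2}$, a short calculation shows that $v$ can be realized as a linear factor of $f_n(x)$ and that $x$ is a factor of $ww$ of length at most $\lceil |v|/L \rceil + 1 \leq s+1$.

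In case~(i), $v$ is a $k$-stabilizing word of length less than $k(n-1)$ appearing in $f_n(x)$, which contradicts Proposition~\ref{CarpiShort} directly (note that $x \in A_m^*$ and the standing assumption $n \geq 27$ is in force). In case~(ii), $v$ is a kernel repetition of order $n$ in $f_n(x)$, so Proposition~\ref{CarpiKernel} delivers a proper factor $v'$ of $x$ that is a $\psi_n$-kernel repetition. The key length bound $|v'| \leq |x| - 1 \leq s = |w|$ guarantees that $v'$ is a factor of some conjugate of $w$ -- that is, a factor of $\circular{w}$ -- contradicting our hypothesis and completing the proof. The main delicate point is establishing the sharper bound $|x| \leq s+1$ (rather than the cruder $|x| \leq 2s$ one would get by simply applying Proposition~\ref{CarpiKernel} to $ww$), as only this sharper bound ensures $v'$ genuinely fits inside a single conjugate of $w$.
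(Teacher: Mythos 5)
Your proposal is correct and follows essentially the same route as the paper: apply Lemma~\ref{CircularCarpiReform} to $f_n(w)\in\ker(\varphi_n)$, realize the offending factor inside $f_n(x)$ where $x$ is (at worst) a circumnavigation of $\circular{w}$, kill case~(i) with Proposition~\ref{CarpiShort}, and in case~(ii) use the ``proper factor'' strengthening of Proposition~\ref{CarpiKernel} so that the resulting $\psi_n$-kernel repetition, having length at most $|w|$, lies in $\circular{w}$. The only difference is presentational: the paper phrases the block-counting step simply as ``$v$ is a factor of $f_n(w')$ for some circumnavigation $w'$ of $w$,'' which is exactly your bound $|x|\leq s+1$.
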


\begin{proof}
Suppose, towards a contradiction, that $\circular{w}$ contains no $\psi_n$-kernel repetitions, and that $\circular{\gamma_n(f_n(w))}$ contains a factor of exponent greater than $n/(n-1)$.  Since $w\in\ker(\psi_n)$, we have $f_n(w)\in \ker(\varphi_n)$.  Thus, by Lemma~\ref{CircularCarpiReform}, some factor $v$ of $\circular{f_n(w)}$ satisfies either condition (i) or condition (ii) of Lemma~\ref{CircularCarpiReform}.  Now $v$ must be a factor of $f_n(w')$ for some circumnavigation $w'$ of $w$ (i.e., we have $w'=aw''a$, where $a\in A_m$ and $aw''$ is a conjugate of $w$).  By Proposition~\ref{CarpiShort}, for every $1\leq k\leq n-1$, the word $f_n(w')$ contains no $k$-stabilizing words of length less than $k(n-1)$.  So it must be the case that $v$ is a kernel repetition of order $n$.  Then by Proposition~\ref{CarpiKernel}, some proper factor of $w'$ must be a $\psi_n$-kernel repetition.  Since every proper factor of the circumnavigation $w'$ is a factor of the circular word $\circular{w}$, this is a contradiction.  So we conclude that the circular word $\circular{\gamma_n(f_n(w))}$ is $n/(n-1)^+$-free.  
\end{proof}

By Lemma~\ref{PsiSuffices}, in order to construct an $n/(n-1)^+$-free circular word of length $M(n-1)(\ell+1)t$ over $A_n$, it suffices to construct a word of length $Mt$ over $A_m$ that lies in $\ker(\psi_n)$ and contains no $\psi_n$-kernel repetitions.  Our construction of such a word uses many ideas of Carpi~\cite[Section~9]{Carpi2007}, and we recommend that the reader reviews this section before proceeding.

Following Carpi, define $\beta=(b_i)_{i\geq 1}$, where
\[
b_i=\begin{cases}
\tt{1}, &\text{ if $i\equiv 1\pmod{3}$;}\\
\tt{2}, &\text{ if $i\equiv 2\pmod{3}$;}\\
b_{i/3}, &\text{ if $i\equiv 0\pmod{3}$.}
\end{cases}
\]
We note that $\beta$ can also be defined (c.f.~\cite[Example~4]{CassaigneKarhumaki1997}) as the fixed point of the morphism $\tau:A_2^*\rightarrow A_2^*$ defined by
\begin{align*}
    \tt{1}&\mapsto \tt{121}\\
    \tt{2}&\mapsto \tt{122}.
\end{align*}
We need two lemmas concerning the factors of $\beta$.

\begin{lemma}\label{BeginEnd}
For every $k\geq 1$, the word $\beta$ has a factor of length $k$ that begins and ends in the letter $\tt{2}$.
\end{lemma}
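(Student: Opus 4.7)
The plan is to exhibit, for each $k\geq 1$, an explicit factor of $\beta$ of length $k$ whose first and last letters are both $\tt{2}$, splitting the argument into three cases according to the residue of $k$ modulo $3$. The key observation that powers all three cases is that, directly from the recursive definition of $\beta$, every index $i$ with $i\equiv 2\pmod 3$ satisfies $b_i=\tt{2}$, while every index $i\equiv 0\pmod 3$ satisfies $b_i=b_{i/3}$.

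If $k\equiv 1\pmod 3$, I would take the factor $b_2 b_3\cdots b_{k+1}$: both $2$ and $k+1$ are congruent to $2$ modulo $3$, so the two endpoints are $\tt{2}$. If $k\equiv 0\pmod 3$ (so in particular $k\geq 3$), I would take the factor $b_6 b_7\cdots b_{k+5}$: here $b_6=b_2=\tt{2}$ by the recursion, and $k+5\equiv 2\pmod 3$ gives $b_{k+5}=\tt{2}$. If $k\equiv 2\pmod 3$ (so in particular $k\geq 2$), I would take the factor $b_{2k+1}b_{2k+2}\cdots b_{3k}$: we have $2k+1\equiv 2\pmod 3$ since $k\equiv 2$, hence $b_{2k+1}=\tt{2}$, and $b_{3k}=b_k=\tt{2}$ since $k\equiv 2\pmod 3$. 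In each case the chosen factor has exactly $k$ letters.

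There is no real obstacle here: each of the three cases is a one-line verification of residues modulo $3$ together with a single application of the recursive clause $b_{3j}=b_j$. The only sanity check is that the smallest instance of each case, namely $k=1,2,3$, yields a valid nonempty factor, which is immediate since $\beta$ is infinite and the starting indices $2$, $5$, $6$ are all well-defined.
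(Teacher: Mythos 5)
Your proof is correct: in each of the three residue classes of $k$ modulo $3$ the proposed index range has exactly $k$ entries, and the endpoint letters are forced to be $\tt{2}$ either directly (index $\equiv 2 \pmod 3$) or via one application of the rule $b_{3j}=b_j$ (namely $b_6=b_2=\tt{2}$ and $b_{3k}=b_k=\tt{2}$ when $k\equiv 2\pmod 3$); the boundary instances $k=1,2,3$ all check out. This is, however, not the route the paper actually presents. The paper notes that a case-based proof on $k\bmod 3$ is possible but instead carries out the verification with the automatic theorem prover \texttt{Walnut}: it encodes the property ``there exists $i$ with $B[i]=\tt{2}$ and $B[i+k-1]=\tt{2}$'' as a predicate over the automatic sequence $\beta$ (the fixed point of $\tau$) and exhibits the resulting automaton, which accepts the base-$3$ representation of every $k\geq 1$. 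Your argument has the advantage of being entirely self-contained and human-checkable, with explicit witnesses for each $k$; the paper's approach trades that transparency for a push-button verification that requires no case analysis and fits the authors' broader use of automated tools, but it relies on trusting the software and reading off the accepted language from the automaton. Either proof suffices for the way the lemma is used later (producing the factor $u$ of length $Mt/4$ in Proposition~\ref{LongPsi}).
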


\begin{proof}
The statement can be verified by a case-based proof depending on the value of $k\bmod{3}$, or by using the automatic theorem proving software \tt{Walnut}~\cite{Walnut}.  We describe the latter approach.  
\begin{figure}
\begin{center}
\includegraphics[scale=0.5]{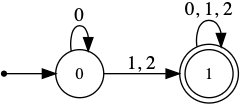}
\end{center}
\caption{The automaton accepting those $(k)_3$ such that the word $\beta$ has a factor of length $k$ that begins and ends in the letter $\tt{2}$.}
\label{BeginsAndEndsInTwo}
\end{figure}
After saving the automaton generating the fixed point of $\tau$ in the \tt{Word Automata Library} folder as \tt{B.txt}, we use the predicate
\[
\text{\footnotesize\tt{eval BeginsAndEndsInTwo "?msd\_3 (k>=1 \& (Ei (B[i]=@2 \& B[i+k-1]=@2)))":}}
\]
The automaton for this predicate is illustrated in Figure~\ref{BeginsAndEndsInTwo}.  The automaton clearly accepts all $(k)_3$ such that $k\geq 1$.
\end{proof}

\begin{lemma}[{Carpi~\cite[Lemma~9.2]{Carpi2007}}]\label{CarpiBeta}
Let $u$ be a factor of $\beta$ with period $q$.  For every $k\geq 0$, if $|u|\geq q+3^k$, then $3^k$ divides $q$.
\end{lemma}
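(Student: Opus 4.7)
The plan is to prove the lemma by induction on $k$. The base case $k=0$ is immediate since $3^0=1$ divides every integer. For the inductive step, given a factor $u$ of $\beta$ with period $q$ and $|u|\geq q+3^k$, the hypothesis applied with $k-1$ (which is available because $|u|\geq q+3^{k-1}$) yields $3^{k-1}\mid q$. Writing $q=3^{k-1}q'$, the task reduces to showing $3\mid q'$, which I will argue by contradiction: assuming $3\nmid q'$, I will exhibit two indices $j$ and $j+q$ in the support of $u$ at which the letters of $\beta$ disagree, contradicting periodicity.

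The key structural input is the defining relation $b_{3i}=b_i$, whose iteration gives $b_{3^{k-1}n}=b_n$ for every $n\geq 1$, regardless of whether $n$ is divisible by $3$. Writing $u=b_s b_{s+1}\cdots b_{s+|u|-1}$, the period condition forces $b_j=b_{j+q}$ on the window $R=[s,\,s+|u|-1-q]$, a block of $|u|-q\geq 3^k$ consecutive positive integers. Among any $3^k$ consecutive integers there are exactly three multiples of $3^{k-1}$, and their quotients by $3^{k-1}$ are three consecutive integers, hence represent each residue class modulo $3$ exactly once. I therefore select $j=3^{k-1}m\in R$ with $m\equiv q'\pmod 3$; since $q'\not\equiv 0\pmod 3$, in particular $3\nmid m$.

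With this choice, $b_j=b_{3^{k-1}m}=b_m$ is determined by $m\bmod 3$; moreover, $j+q=3^{k-1}(m+q')$, and since $m+q'\equiv 2q'\not\equiv 0\pmod 3$, the iterated rule also gives $b_{j+q}=b_{m+q'}$, determined by $(m+q')\bmod 3$. The residues of $m$ and $m+q'$ modulo $3$ are respectively $q'$ and $2q'$, the two distinct nonzero classes, so $b_j$ and $b_{j+q}$ are $\tt{1}$ and $\tt{2}$ in some order, producing the desired contradiction.

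The only conceptual step is choosing $m$ so that both $m$ and $m+q'$ land in \emph{different} nonzero residue classes modulo $3$; the choice $m\equiv q'\pmod 3$ accomplishes both at once (keeping $m+q'\equiv 2q'$ nonzero as well). The pigeonhole count of multiples of $3^{k-1}$ inside $R$ and the final residue bookkeeping are purely mechanical, so I do not anticipate a serious obstacle.
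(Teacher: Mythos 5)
Your argument is correct. Note, however, that the paper does not actually prove this lemma: it is quoted verbatim from Carpi (Lemma~9.2 of \cite{Carpi2007}), so there is no in-paper proof to compare against. Your induction is a sound, self-contained replacement. The base case is trivial; in the step you legitimately obtain $3^{k-1}\mid q$ from the weaker hypothesis $|u|\geq q+3^{k-1}$, and the residue argument is airtight: the window of $|u|-q\geq 3^k$ positions on which periodicity forces $b_j=b_{j+q}$ contains three multiples of $3^{k-1}$ with consecutive quotients, so you may pick $j=3^{k-1}m$ with $m\equiv q'\pmod 3$; the iterated Toeplitz rule $b_{3i}=b_i$ gives $b_j=b_m$ and $b_{j+q}=b_{m+q'}$, and since $q'\not\equiv 0$, the residues $q'$ and $2q'$ are the two distinct nonzero classes mod $3$, so these letters are $\tt{1}$ and $\tt{2}$ in some order, contradicting periodicity. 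This is essentially the same self-similarity of $\beta$ that Carpi exploits (his proof also proceeds by induction on $k$ using the structure $b_{3i}=b_i$), though your step --- first pulling $3^{k-1}\mid q$ from the inductive hypothesis at the same word and then checking divisibility by one more factor of $3$ at scale $3^{k-1}$ --- is organized a bit differently from a decimation-style descent; either route is fine, and yours has the merit of requiring nothing beyond the window-counting and residue bookkeeping you describe.
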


For every $t\geq 1$, we define a set of words $\Lambda_t\subseteq A_m^*$, all of length $Mt$.  We define $\Lambda_t$ by $x_0x_1\cdots x_{Mt-1}\in \Lambda_t$ if and only if $x_i=\max\{a\in A_m\colon\ 4^{a-3} \text{ divides } i\}$ whenever $i\equiv 0$ (mod $4$), and $x_i\in \{\tt{1},\tt{2},\tt{3}\}$ whenever $i\not\equiv 0$ (mod $4$).  

We prove two lemmas concerning the words in $\Lambda_t$.  The proof of the first lemma uses the main idea from Carpi's proof of~\cite[Lemma~9.3]{Carpi2007}.

\begin{lemma}\label{Analogous}
Let $x\in \Lambda_t$, and let $v$ be a factor of the circular word $\circular{x}$.  If $v\in\ker(\psi_n)$, then $M$ divides $|v|$.
\end{lemma}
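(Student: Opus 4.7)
The plan is to mirror (and extend to circular factors) the linear-word argument behind Carpi's Lemma~9.3 by encoding $v$ as a length-$L$ window $\tilde{x}_j\tilde{x}_{j+1}\cdots\tilde{x}_{j+L-1}$ in the $Mt$-periodic extension $\tilde{x}$ of $x$ (with $L:=|v|$ and $0\le j<Mt$), and then translating the divisibility conditions from Lemma~\ref{Carpi4} into equalities of base-$4$ digits of the window endpoints.

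First I would note that by the defining rule of $\Lambda_t$, for every $a\in\{4,\ldots,m\}$ and every $i\in\{0,\ldots,Mt-1\}$ we have $x_i\ge a$ if and only if $4^{a-3}$ divides $i$. Because $4^{a-3}$ divides $Mt$, the same criterion holds in the periodic extension: $\tilde{x}_i\ge a$ iff $4^{a-3}\mid i$ for every $i\in\mathbb{Z}$. Counting such positions in the window yields
\[
N_a \;:=\; |v|_{\ge a} \;=\; \Bigl\lfloor\tfrac{j+L-1}{4^{a-3}}\Bigr\rfloor - \Bigl\lfloor\tfrac{j-1}{4^{a-3}}\Bigr\rfloor.
\]
Lemma~\ref{Carpi4} applied to $v\in\ker(\psi_n)$ gives $4\mid|v|_b$ for every $b\in A_m$; telescoping this yields $4\mid N_a$ for $a\in\{4,\ldots,m\}$, and summing over all letters gives $L\equiv 0\pmod 4$.

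The key observation is that $\lfloor X/4^k\rfloor\bmod 4$ is nothing but the $k$-th base-$4$ digit of $X$. Thus $4\mid N_a$ says exactly that the $(a-3)$-rd base-$4$ digits of $A:=j+L-1$ and $B:=j-1$ coincide; letting $a$ range over $\{4,\ldots,m\}$ forces agreement at digit positions $1,2,\ldots,m-3$. Combined with $L\equiv 0\pmod 4$, which forces agreement at position $0$ as well, this gives $A\equiv B\pmod{4^{m-2}}$, whence $L=A-B\equiv 0\pmod M$. The only mild technicality is the boundary case $j=0$, where $B=-1$; I would handle it using $\lfloor -1/4^k\rfloor=-1\equiv 3\pmod 4$ (equivalently, $-1=\ldots 333_4$), so that the digit-matching argument is uniform in $j$. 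I do not anticipate any step being a genuine obstacle; the creative content lies in recognising that the per-letter divisibility condition collapses onto equality of specific base-$4$ digits of the window endpoints, after which $M\mid L$ falls out immediately.
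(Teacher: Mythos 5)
Your proof is correct, and it reaches the conclusion by a somewhat different route than the paper, even though it rests on the same two ingredients: the structural fact that in a word of $\Lambda_t$ (and in its $Mt$-periodic extension, legitimate because $4^{a-3}$ divides $Mt$ for every $a\le m$) the positions carrying a letter $\geq a$ are exactly the multiples of $4^{a-3}$, together with Lemma~\ref{Carpi4} telescoped over the tail counts $\sum_{b\geq a}|v|_b$. The paper extracts $M\mid |v|$ by contradiction at a single critical scale: it writes $|v|=4^bc$ with $4^b$ the exact power of $4$ dividing $|v|$, assumes $b\leq m-3$, observes that the window then contains exactly $c$ multiples of $4^b$ (its length being a multiple of $4^b$, the count is position-independent), and gets $4\mid c$ from Lemma~\ref{Carpi4}, contradicting the maximality of $b$. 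You instead work at all scales $4^1,\dots,4^{m-3}$ simultaneously and directly: writing each tail count as $\lfloor (j+L-1)/4^{a-3}\rfloor-\lfloor (j-1)/4^{a-3}\rfloor$, you turn $4\mid N_a$ (together with $4\mid L$) into agreement of the base-$4$ digits of the window endpoints at positions $0,1,\dots,m-3$, hence $j+L-1\equiv j-1\pmod{4^{m-2}}$ and $M\mid L$. Both arguments handle the circular structure identically (a factor of $\circular{x}$ is a window in the periodic extension, and divisibility by the relevant powers of $4$ is preserved modulo $Mt$). What your version buys is a direct, contradiction-free derivation that makes the endpoint congruence modulo $M$ explicit; what it costs is the floor/digit bookkeeping, including the $j=0$ boundary case $B=-1$, which you dispose of correctly via $\lfloor -1/4^k\rfloor=-1\equiv 3\pmod 4$ (the paper's single-scale contradiction avoids that bookkeeping entirely). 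Either argument is complete.
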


\begin{proof}
The statement is trivially true if $v=\varepsilon$, so assume $|v|>0$.  Set $|v|=4^bc$, where $4^b$ is the maximal power of $4$ dividing $|v|$.  Suppose, towards a contradiction, that $b\leq m-3$.  Since $v\in\ker(\psi_n)$, by Lemma~\ref{Carpi4}, we see that $4$ divides $|v|$, meaning $b\geq 1$.

Write $x=x_0x_1\cdots x_{Mt-1}$.  Then we have $v=x_ix_{i+1}\cdots x_{i+4^bc-1}$ for some $i\geq 1$, with indices taken modulo $Mt$.  Since $Mt$ is divisible by $M=4^{m-2}$ and $b<m-2$, for all $j\geq 0$ and $k\in\{1,2,\ldots,b\}$, we see that $4^k$ divides $j$ if and only if $4^k$ divides $j \bmod{Mt}$. Since $b\geq 1$, we have $b+3\geq 4$, and hence $x_j\geq b+3$ implies $j\equiv 0$ (mod $4$).  Thus, by definition, for any $j\in \{i,i+1,\ldots,i+4^bc-1\}$, we have $x_j\geq b+3$ if and only if $4^b$ divides $j$.  Thus, we have that the sum $\sum_{a=b+3}^{m}|v|_a$ is exactly the number of integers in the set $\{i,i+1,\dots,i+4^bc-1\}$ that are divisible by $4^b$, which is exactly $c$.  Since $v\in \ker(\psi_n)$, by Lemma~\ref{Carpi4}, we conclude that $4$ divides $c$, contradicting the maximality of $b$.
\end{proof}

\begin{lemma}\label{4Divides4}
Let $x\in \Lambda_t$.  For every letter $a\in\{\tt{4},\tt{5},\ldots,\tt{m}\}$, the number $|x|_a$ is a multiple of $4$.
\end{lemma}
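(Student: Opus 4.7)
The plan is a direct count. For each letter $a \in \{\tt{4}, \ldots, \tt{m}\}$, I will determine exactly which positions $i \in \{0, 1, \ldots, Mt-1\}$ satisfy $x_i = a$, and verify that the number of such positions is a multiple of $4$.

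The key observation is that, by the definition of $\Lambda_t$, any position $i \not\equiv 0 \pmod{4}$ carries a letter from $\{\tt{1}, \tt{2}, \tt{3}\}$. Hence only positions with $i \equiv 0 \pmod{4}$ can contribute to $|x|_a$ for $a \geq 4$. At such positions, $x_i = \max\{b \in A_m : 4^{b-3} \text{ divides } i\}$ depends only on the $4$-adic valuation of $i$: with the convention $v_4(0) = \infty$, one has $x_i = \min(m, v_4(i) + 3)$. In particular, $x_0 = \tt{m}$.

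I would then split into two cases. For $a \in \{\tt{4}, \ldots, \tt{m-1}\}$, the equation $x_i = a$ is equivalent to $v_4(i) = a - 3$, i.e., $4^{a-3} \mid i$ but $4^{a-2} \nmid i$. Counting the multiples of $4^{a-3}$ and of $4^{a-2}$ in $\{0, 1, \ldots, Mt - 1\}$, and using that $4^{a-2}$ divides $M = 4^{m-2}$, I obtain $|x|_a = Mt/4^{a-3} - Mt/4^{a-2} = 3 \cdot 4^{m-a} t$, which is divisible by $4$ because $m - a \geq 1$. For $a = \tt{m}$, the condition becomes simply $4^{m-3} \mid i$ (with the position $i = 0$ naturally included), giving $|x|_{\tt{m}} = Mt/4^{m-3} = 4t$, again a multiple of $4$.

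The argument is essentially a routine counting exercise, and I do not foresee any genuine obstacle. The only mild subtlety is treating the boundary letter $a = \tt{m}$ correctly, where the ``exact valuation'' count used for $a \leq m - 1$ must be replaced by an ``at-least'' count so that the position $i = 0$ is absorbed in the correct bucket. One should also note that the intermediate range $\{\tt{4}, \ldots, \tt{m-1}\}$ is empty when $m = 4$, so in that case only the computation for $a = \tt{m}$ is needed.
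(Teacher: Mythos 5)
Your proof is correct and takes essentially the same route as the paper: both arguments reduce to counting positions divisible by powers of $4$, using that $4^{a-2}$ divides $M=4^{m-2}$. The paper counts the tail sums $\sum_{b=a}^{m}|x|_b = Mt/4^{a-3}$ and finishes by taking differences, while you compute each $|x|_a$ explicitly ($3\cdot 4^{m-a}t$ for $a<m$, $4t$ for $a=m$) as the difference of the same two counts; this is the same argument with slightly more explicit bookkeeping.
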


\begin{proof}
Write $x=x_0x_1\ldots x_{Mt-1}$, and let $a\in \{\tt{4},\tt{5},\ldots,\tt{m}\}$.
Since $x\in \Lambda_t$, we have $x_i\geq a$ if and only if $4^{a-3}$ divides $i$.  Since $|x|=Mt=4^{m-2}t$, we have
\[
\sum_{b=a}^m|x|_b=Mt/4^{a-3}=4^{1+m-a}t.
\]
Since $m-a+1\geq 1$, we see that $4$ divides $\sum_{b=a}^m|x|_b$.  The fact that $4$ divides $|x|_a$ now follows by a straightforward inductive argument.
\end{proof}

We are now ready to construct a circular word of length $Mt$ over $A_m$ that belongs to $\ker(\psi_n)$ and contains no $\psi_n$-kernel repetitions.  The proof of the next proposition uses some arguments that were first used by Carpi~\cite[Proposition~9.4]{Carpi2007}.

\begin{proposition}\label{LongPsi}
Suppose that $n\geq 45$.  For every integer $t\geq 1$, there is a word $w\in A_m^*\cap\ker(\psi_n)$ of length $Mt$ such that the circular word $\circular{w}$ contains no $\psi_n$-kernel repetitions.
\end{proposition}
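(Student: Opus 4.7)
The plan is to construct $w$ as an element of $\Lambda_t$ of length $Mt$ by filling the free positions (indices $i \not\equiv 0 \pmod 4$) in order with the letters of a carefully chosen factor $u$ of $\beta$ of length $3Mt/4$. Using Lemma~\ref{BeginEnd}, I take $u$ to begin and end in $\tt{2}$; with slight extra work it can be arranged that $|u|_{\tt{1}}$ is divisible by $4$. Membership of $w$ in $\ker(\psi_n)$ then follows from Lemma~\ref{Carpi4}: Lemma~\ref{4Divides4} handles the letters $\tt{4},\ldots,\tt{m}$; the letter $\tt{3}$ does not appear in $w$ at all (fixed positions carry letters $\geq \tt{4}$, and $u$ uses only $\{\tt{1},\tt{2}\}$); and the choice of $u$, together with $|u|=3Mt/4\equiv 0\pmod 4$ for $m\geq 4$, yields divisibility for both $\tt{1}$ and $\tt{2}$.

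For the absence of $\psi_n$-kernel repetitions in $\circular{w}$, I argue by contradiction: suppose $v$ is such a repetition with period $q$ and kernel factor $v'\in\ker(\psi_n)$ of length $q$. Then $|v|-q\geq (q-n-2)/(n-1)$ by definition, and Lemma~\ref{Analogous} applied to $v'$ forces $M\mid q$; write $q=Mq'$ with $q'\geq 1$. The key reduction is to transfer this repetition to $\beta$: extract the subword $u_v$ of $v$ consisting of the letters at the free positions of the relevant conjugate of $w$. Since $4\mid q$, $u_v$ inherits period $3q/4$ and has length $3|v|/4$. If $v$ does not cross the seam of $\circular{w}$, then $u_v$ is literally a factor of $u$, hence of $\beta$. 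The case where $v$ crosses the seam is the main technical obstacle, and I would handle it by exploiting the end-letter condition on $u$ to argue that $u_v$ (or a sufficiently long contiguous portion thereof) is still a factor of $\beta$.

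With $u_v$ established as a factor of $\beta$ of period $3q/4$ and length $3|v|/4$, Lemma~\ref{CarpiBeta} applies: whenever $|v|-q\geq 4\cdot 3^{k-1}$, it gives $3^k\mid 3q/4$, which unpacks to $3^{k-1}\mid q'$ since $q/4=4^{m-3}q'$ is coprime to $3$. This enables a bootstrap: from $q'\geq 1$ the kernel inequality gives $|v|-q\geq (Mq'-n-2)/(n-1)$, and for $n\geq 45$ we have $M\geq 4^5=1024>13n-10$, so at every stage $(M-12(n-1))\cdot 3^{k-1}\geq n+2$; consequently each iterative step succeeds and $q'\geq 3^k$ for every $k\geq 1$. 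Since $q\leq |v|\leq Mt$ yields $q'\leq t$, this is the desired contradiction.
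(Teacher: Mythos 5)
Your overall reduction (fill the positions $i\not\equiv 0\pmod 4$ of a word in $\Lambda_t$ with a factor of $\beta$, use Lemma~\ref{Analogous} to force $M\mid q$, pass to the subword on free positions to get a word of period $3q/4$, and play Lemma~\ref{CarpiBeta} against the kernel inequality) is the right skeleton, and your bootstrap on powers of $3$ is a legitimate variant of the paper's ``choose $k$ with $q+2\cdot 3^k\le |X|<q+2\cdot 3^{k+1}$'' step. But there are two genuine gaps, and the second one is exactly the hard part of the proposition. First, the claim that one can arrange $|u|_{\tt{1}}\equiv 0\pmod 4$ for a factor of $\beta$ of the prescribed length beginning and ending in $\tt{2}$ is asserted, not proved; the abelian counts of factors of $\beta$ of a fixed length vary only within a narrow range, so hitting a prescribed residue class mod $4$ under the additional end-letter constraint is not obviously possible. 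The paper's construction is built to avoid ever needing this: it fills the odd positions with $u\sigma(u)$ (a copy of $u$ and a letter-renamed copy) and uses the auxiliary padding word at positions $\equiv 2\pmod 4$ (a block of $s$ threes and $s$ twos, $s$ chosen mod $4$) so that $|w|_{\tt{2}}$, $|w|_{\tt{3}}$, and hence $|w|_{\tt{1}}$ come out divisible by $4$ regardless of $|u|_{\tt{1}}$.

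Second, and decisively, the seam case is not an afterthought that the end-letter condition can absorb: with your construction the free positions of $\circular{w}$ carry the \emph{circular} word $\circular{u}$, and a kernel repetition straddling the seam yields a reduced word that is a factor of $\circular{u}$ but in general not a factor of $\beta$ (a suffix of $u$ followed by a prefix of $u$ need not occur in $\beta$), so Lemma~\ref{CarpiBeta} simply does not apply to it; moreover its period $3q/4$ can be close to $3Mt/4$, so no ``sufficiently long contiguous portion'' on one side of the seam can be long enough relative to the period. The paper's proof is organized around this obstacle: because the odd positions carry $u\sigma(u)$, the bigrams $\tt{23}$ and $\tt{32}$ occur exactly once in $\circular{u\sigma(u)}$ and every length-$3$ factor of $u$ (resp.\ $\sigma(u)$) contains a $\tt{2}$ (resp.\ $\tt{3}$), so the repeated block $y$ of length $\ge 3$ has all its occurrences inside $u$ or all inside $\sigma(u)$; the in-$u$ case is your Lemma~\ref{CarpiBeta} argument, while the wrap-around case is handled not by locating a long factor of $\beta$ at all, but by constructing a \emph{complementary} kernel repetition $YZ'Y$ (with $YZ'\in\ker(\psi_n)$ obtained by subtracting letter counts and $|Z|>|Z'|$ giving the required length inequality) whose period is smaller and which falls back into the first case. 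Without the $\sigma$-copy and some such complementary-repetition argument, your sketch has no mechanism to rule out seam-crossing repetitions, so as written the proof does not go through.
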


\begin{proof}
Fix $t\geq 1$.  Since $n\geq 45$, we have $m\geq 7$ and $M=4^{m-2}\geq 4^5$.  We use these facts frequently.  Let $u$ be a factor of $\beta$ of length $Mt/4$ that begins and ends in the letter $\tt{2}$; such a factor is guaranteed to exist by Lemma~\ref{BeginEnd}.  Let $\sigma:A_2^*\rightarrow \{\tt{1},\tt{3}\}^*$ be the morphism defined by
\begin{align*}
\tt{1}&\mapsto \tt{1}\\
\tt{2}&\mapsto \tt{3}.
\end{align*}
Write $u\sigma(u)=u_1u_2\ldots u_{Mt/2}$.

Let $s=4-(|u|_\tt{2}\bmod 4)$, and define $v=v_1v_2\cdots v_{Mt/4}$ by
\[
v_i=\begin{cases}
\tt{3}, &\text{ if $1\leq i\leq s$;}\\
\tt{2}, &\text{ if $s+1\leq i\leq 2s$;}\\
\tt{1}, &\text{ if $2s\leq i\leq Mt/4$}.
\end{cases}
\]
Finally, define $w=w_0w_1\cdots w_{Mt-1}$ by 
\[
w_i=\begin{cases}
\max\{a\in A_m\colon\ 4^{a-3} \text{ divides } i\}, & \text{ if $i\equiv 0$ (mod $4$);}\\
v_{(i+2)/4}, &\text{ if $i\equiv 2$ (mod $4$);}\\
u_{(i+1)/2}, &\text{ if $i$ is odd.}
\end{cases}
\]
Evidently, we have $w\in \Lambda_t$.  We will show that $w\in \ker(\psi_n)$, and that the circular word $\circular{w}$ contains no $\psi_n$-kernel repetitions, thus proving the proposition statement.

First, we show that $w\in \ker(\psi_n)$.  By Lemma~\ref{CarpiKernel}, it suffices to show that $4$ divides $|w|_a$ for all $a\in\{\tt{1},\tt{2},\ldots,\tt{m}\}$.  Since $w\in \Lambda_t$, by Lemma~\ref{4Divides4}, we have that $4$ divides $|w|_a$ for all $a\in\{\tt{4},\tt{5},\ldots,\tt{m}\}$.  Next, note that 
\[
|w|_{\tt{2}}=|u|_{\tt{2}}+s=|u|_\tt{2}+4-(|u|_2\bmod{4}),
\]
which is clearly divisible by $4$.  Since
\[
|w|_{\tt{3}}=|\sigma(u)|_{\tt{3}}+s=|u|_{\tt{2}}+s=|w|_{\tt{2}},
\]
we see that $4$ divides $|w|_{\tt{3}}$ as well.  Finally, we have
\[
|w|_{\tt{1}}=|w|-\sum_{b=2}^m |w|_b=Mt-\sum_{b=2}^m|w|_b.
\]
Since $4$ divides both $M$ and $|w|_b$ for all $b\geq 2$, we conclude that $4$ divides $|w|_{\tt{1}}$.

It remains to show that the circular word $\circular{w}$ contains no $\psi_n$-kernel repetitions.  Suppose towards a contradiction that some factor $X$ of $\circular{w}$ is a $\psi_n$-kernel repetition of period $q$. Then by definition, we have 
\begin{align}\label{KernelIneq}
nq-3\leq (n-1)(|X|+1).
\end{align}
Since $m=\lfloor (n-3)/6\rfloor$, we must also have $n\leq 6m+8$.

First of all, consider the case $|X|\leq q+5$.  Applying this inequality to the right side of (\ref{KernelIneq}) and then rearranging, we obtain 
\[
q\leq 6n-3.
\]
By Lemma~\ref{Analogous}, we must have $q\geq M=4^{m-2}$.  Together with the fact that $n\leq 6m+8$, this gives
\[
4^{m-2}\leq 36m+45.
\]
But this last inequality implies $m\leq 6$, a contradiction.

So we may assume that $|X|\geq q+6$.  Then one has $q+2\cdot 3^k\leq |X|<q+2\cdot 3^{k+1}$ for some integer $k\geq 1$.  Using (\ref{KernelIneq}) and the inequality $|X|+1\leq q+2\cdot 3^{k+1}$, we obtain
\begin{align}\label{qUpper}
    q\leq 2(n-1)3^{k+1}+3\leq 3^k(36m+45).
\end{align}

Deleting the letters of even index in $X$, we obtain a factor $x$ of the circular word $\circular{u\sigma(u)}$.  By Lemma~\ref{Analogous}, we know that $4^{m-2}$ divides $q$, so $q$ is certainly divisible by $2$, and $x$ has period $q/2$.  Since $|X|\geq q+2\cdot 3^k$, we have 
\begin{align}\label{xLength}
|x|\geq \left\lfloor \tfrac{|X|}{2}\right\rfloor \geq q/2+3^k\geq q/2+3. 
\end{align}
Write $x=x'y=yx''$, where $|x'|=|x''|=q/2$.   From (\ref{xLength}), we see that $|y|\geq 3$.  

By construction, we may write $u=\tt{2}u'\tt{2}$ and $\sigma(u)=\tt{3}u''\tt{3}$, where $u'\in\{\tt{1},\tt{2}\}^*$ and $u''\in\{\tt{1},\tt{3}\}^*$.  So we have
\[
u\sigma(u)=\tt{2}u'\tt{2}\tt{3}u''\tt{3}.
\]
Thus, in the circular word $\circular{u\sigma(u)}$, the factors $\tt{23}$ and $\tt{32}$ both appear exactly once.  Further, every factor of length $3$ of $u$ contains a $\tt{2}$, and every factor of length $3$ of $\sigma(u)$ contains a $\tt{3}$.  Therefore, either every appearance of $y$ in $\circular{u\sigma(u)}$ is contained entirely in $u$, or every appearance of $y$ in $\circular{u\sigma(u)}$ is contained entirely in $\sigma(u)$.  Without loss of generality, assume that every appearance of $y$ in $\circular{u\sigma(u)}$ is contained entirely in $u$.  Now we consider two cases.

\smallskip

\noindent
\textbf{Case 1:} The factor $x$ is contained entirely in $u$.  

\noindent
Then $x$ is a factor of $\beta$ with period $q/2$.  By Lemma~\ref{Analogous}, we know that $4^{m-2}$ divides $q$.  Recall from (\ref{xLength}) that $|x|\geq q/2+3^k$.  Thus, by Lemma~\ref{CarpiBeta}, we have that $3^k$ divides $q/2$.  Therefore, we have $q\geq 4^{m-2}3^k$.  Together with (\ref{qUpper}), this gives
\[
4^{m-2}\leq 36m+45.
\]
But we have already seen that this inequality implies $m\leq 6$, a contradiction.

\smallskip

\noindent
\textbf{Case 2:} The factor $x$ contains the entire word $\sigma(u)$ as a factor.  

\noindent
Then we can write $x=yzy$, where $|yz|=q/2$, and $z$ contains the entire word $\sigma(u)$.  Let $yzyz'$ be a conjugate of $u\sigma(u)$.  See Figure~\ref{Circle} for an illustration of the circular word $\circular{u\sigma(u)}$.

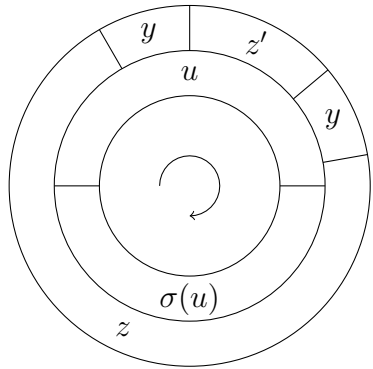
\begin{figure}[htb]
\begin{center}
\begin{tikzpicture}[scale=0.8]
\pgfmathsetmacro{\i}{1.5}
\pgfmathsetmacro{\m}{2.25}
\pgfmathsetmacro{\o}{3}
\pgfmathsetmacro{\im}{(\i+\m)/2}
\pgfmathsetmacro{\mo}{(\m+\o)/2}
\draw (0,0) circle (\i);
\draw (0,0) circle (\m);
\draw (0,0) circle (\o);
\draw (\i,0) -- (\m,0);
\draw (-\i,0) -- (-\m,0);
\draw (10:\m) -- (10:\o);
\draw (40:\m) -- (40:\o);
\draw (90:\m) -- (90:\o);
\draw (120:\m) -- (120:\o);
\node at (105:\mo) {$y$};
\node at (65:\mo) {$z'$};
\node at (25:\mo) {$y$};
\node at (245:\mo) {$z$};
\node at (90:\im) {$u$};
\node at (270:\im) {$\sigma(u)$};
\draw [domain=-180:90,style=->] plot ({0.5*cos(\x)}, {-0.5*sin(\x)});
\end{tikzpicture}
\end{center}
\caption{The circular word $\circular{u\sigma(u)}$ in Case 2 of the proof of Proposition~\ref{LongPsi}.}
\label{Circle}
\end{figure}

Now let $YZYZ'$ be the conjugate of $w$ corresponding to $yzyz'$, i.e., we have that $YZ$ is the length $q$ prefix of $X$, $|Y|=2|y|$, $|Z|=2|z|$, and $|Z'|=2|z'|$.  We claim that $YZ'Y$ is a $\psi_n$-kernel repetition with period $|YZ'|$.   First of all, note that since $yz'y$ is a factor of $u$, while $z$ contains all of $\sigma(u)$, we have $|Z|>|Z'|$.  Next, note that both $YZ\in \ker(\psi_n)$ and $YZYZ'\in\ker(\psi_n)$.  Now let $a\in A_m$.  By Lemma~\ref{CarpiKernel}, both $|YZ|_a$ and $|YZYZ'|_a$ are multiples of $4$. It follows that
\[
|YZ'|_a=|YZYZ'|_a-|YZ|_a
\]
is also a multiple of $4$, and hence $YZ'\in\ker(\psi_n)$.  By the definition of $\psi_n$-kernel repetition, we have
\[
(n-1)(|YZY|+1)\geq n\cdot |YZ|-3.
\]
Rearranging the above inequality, and then using the fact that $|Z|>|Z'|$, we have
\[
(n-2)|Y|+n+2\geq |Z|>|Z'|,
\]
which implies that $YZ'Y$ is also a $\psi_n$-kernel repetition.  But this is impossible by Case 1.
\end{proof}

Finally, Theorem~\ref{main} follows directly from the next result.

\begin{proposition}
Suppose that $n\geq 45$.  For every integer $t\geq 1$, there is a word $W\in A_n^*$ of length $M(n-1)(\ell+1)t$ such that the circular word $\circular{W}$ is $n/(n-1)^+$-free. 
\end{proposition}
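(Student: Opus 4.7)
The plan is to combine Proposition~\ref{LongPsi} with Lemma~\ref{PsiSuffices} in the obvious way, using the uniform morphism $f_n$ and the coding map $\gamma_n$ to lift the word produced over the small alphabet $A_m$ to one over $A_n$ of the required length.

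First, I invoke Proposition~\ref{LongPsi}, which (since $n \geq 45$) supplies a word $w \in A_m^* \cap \ker(\psi_n)$ with $|w| = Mt$ such that the circular word $\circular{w}$ contains no $\psi_n$-kernel repetitions. Set $W = \gamma_n(f_n(w))$, so that $W \in A_n^*$. Since $f_n$ is $(n-1)(\ell+1)$-uniform and $\gamma_n$ preserves length, we have
\[
|W| = |\gamma_n(f_n(w))| = |f_n(w)| = (n-1)(\ell+1)|w| = M(n-1)(\ell+1)t,
\]
which is the desired length.

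Next, to verify that $\circular{W}$ is $n/(n-1)^+$-free, I apply Lemma~\ref{PsiSuffices} to $w$. The hypotheses of that lemma require exactly that $w \in A_m^* \cap \ker(\psi_n)$ and that $\circular{w}$ contains no $\psi_n$-kernel repetitions, both of which are guaranteed by Proposition~\ref{LongPsi}. The conclusion of the lemma is precisely that $\circular{\gamma_n(f_n(w))} = \circular{W}$ is $n/(n-1)^+$-free.

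There is no real obstacle here: all the heavy lifting was done in Proposition~\ref{LongPsi} (constructing an appropriate word over $A_m$), in the Carpi-style reduction encoded in Lemma~\ref{PsiSuffices}, and in the length-preserving behavior of $\gamma_n$ together with the uniformity of $f_n$. The entire proof is just assembling these three ingredients. Theorem~\ref{main} then follows immediately, because the lengths $M(n-1)(\ell+1)t$ for $t \geq 1$ are unbounded, so there are arbitrarily long $n/(n-1)^+$-free circular words over $A_n$; combined with the lower bound $\CRTW(n) \geq \RT(n) = n/(n-1)$ from \eqref{Hierarchy}, we conclude $\CRTW(n) = n/(n-1)$ for all $n \geq 45$.
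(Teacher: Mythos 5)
Your proof is correct and follows exactly the paper's own argument: take the word $w$ from Proposition~\ref{LongPsi}, set $W=\gamma_n(f_n(w))$, compute the length via the $(n-1)(\ell+1)$-uniformity of $f_n$ and the length-preservation of $\gamma_n$, and conclude via Lemma~\ref{PsiSuffices}. The closing remark on how Theorem~\ref{main} follows is also consistent with the paper.
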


\begin{proof}
Fix $t\geq 1$.  By Proposition~\ref{LongPsi}, there is a word $w\in A_m^*\cap \ker(\psi_n)$ of length $Mt$ such that the circular word $\circular{w}$ contains no $\psi_n$-kernel repetitions.  Since $f_n$ is $(n-1)(\ell+1)$-uniform, and $\gamma_n$ preserves length, we have $|\gamma_n(f_n(w))|=M(n-1)(\ell+1)t$.  By Lemma~\ref{PsiSuffices}, the circular word $\circular{\gamma_n(f_n(w))}$ is $n/(n-1)^+$-free.
\end{proof}

\section{Conclusion}

We have shown that $\CRTW(n)=\RT(n)=n/(n-1)$ for all $n\geq 45$.  The conjecture that $\CRTW(n)=\RT(n)$ remains open for all $4\leq n\leq 44$, and the stronger conjecture that $\CRTI(n)=\RT(n)$ remains open for all $n\geq 4$.

To conclude, we will place the notion of weak circular repetition threshold in a broader context, and discuss a more general problem.  Let $G$ be a graph, and let $f:V(G)\rightarrow A_n$ be an $n$-colouring of $G$.  A word $w\in A_n^*$ is called a \emph{factor} of $G$ if $w=f(v_1)f(v_2)\cdots f(v_k)$ for some path $v_1,v_2,\ldots v_k$ in $G$ that contains no repeated vertices.
An $n$-colouring of the graph $G$ is called $r$-free if it contains no factors of exponent greater than or equal to $r$.  By this definition, the $2$-free colourings of $G$ are exactly the \emph{nonrepetitive colourings} of $G$, first defined by Alon et al.~\cite{Alon2002}.  Nonrepetive colourings have been widely studied in the last two decades.  In particular, Dujmovi\'{c} et al.~\cite{Dujmovic2019} recently confirmed what was probably the most important conjecture on nonrepetitive colourings, namely that every planar graph can be nonrepetitively coloured with a bounded number of colours.  Their paper also contains an extensive list of references to other work on nonrepetitive colourings and related notions.

To date, most work on $r$-free colourings has concerned the problem of fixing a number $r$ (most commonly $r=2$) and determining the minimum number of colours necessary for an $r$-free colouring of a given graph. Ochem and Vaslet~\cite{OchemVaslet2012} introduced a notion of repetition threshold for classes of graphs, which considers the problem the other way around -- for a fixed number of colours, find the smallest value of $r$ such that there is an $r$-free colouring of a given graph.  Formally, the \emph{repetition threshold} for $n$ letters on $G$, denoted $\RT(n,G)$, is defined by:
\[
\RT(n,G)=\inf\{\text{$r>1\colon$ there is an $r$-free $n$-colouring of $G$}\}.
\]
For a collection of graphs $\mathcal{G}$, the repetition threshold for $\mathcal{G}$ is defined by $\RT(n,\mathcal{G})=\sup_{G\in\mathcal{G}} \RT(n,G)$.   Note that the strong circular repetition threshold is equivalent to the repetition threshold $\RT(n,\mathcal{C})$, where $\mathcal{C}$ is the collection of cycles.

Ochem and Vaslet determined all values of $\RT(n,\mathcal{T})$, where $\mathcal{T}$ is the collection of all trees.  Lu\v{z}ar, Ochem, and Pinlou~\cite{LuzarOchemPinlou2018} determined all values of $\RT(n,\mathcal{CP})$ and $\RT(n,\mathcal{CP}_3)$, where $\mathcal{CP}$ is the collection of all caterpillars, and $\mathcal{CP}_3$ is the collection of all caterpillars of maximum degree $3$.  They also gave upper and lower bounds on $\RT(n,\mathcal{T}_3)$, where $\mathcal{T}_3$ is the collection of all trees of maximum degree $3$.

Ochem and Vaslet~\cite{OchemVaslet2012} also defined a notion of repetition threshold for ``sufficiently large subdivisions'' of all graphs. For a graph $G$, let $\mathcal{S}(G)$ denote the collection of \emph{subdivisions} of $G$ (i.e., those graphs obtained from $G$ by a sequence of edge subdivisions).  For a collection of graphs $\mathcal{G}$, we define the \emph{weak repetition threshold} for $\mathcal{G}$, denoted $\WRT(n,\mathcal{G})$, by
\[
\WRT(n,\mathcal{G})=\sup_{G\in \mathcal{G}}\inf_{\text{$G_s\in \mathcal{S}(G)$}} \RT(n,G_s).
\] 
The repetition threshold for subdivided graphs, as defined by Ochem and Vaslet, is then equivalent to the weak repetition threshold for the collection $\mathcal{H}$ of all graphs.  Ochem and Vaslet proved that
\[
\WRT(n,\mathcal{H})=
\begin{cases}
7/3, &\text{if $n=2$;}\\
7/4, &\text{if $n=3$;}\\
3/2, &\text{if $n\geq 4$.}\\
\end{cases}
\] 
For all $n\geq 4$, the lower bound $\WRT(n,\mathcal{H})\geq 3/2$  follows from the somewhat trivial fact that any $n$-colouring of a graph with a vertex of degree $n$ must contain a factor of exponent $3/2$.  This suggests restricting to classes of graphs with bounded maximum degree, as was done for the repetition thresholds of caterpillars and trees~\cite{LuzarOchemPinlou2018}.  Let $\mathcal{H}_k$ denote the collection of graphs with maximum degree $k$.  For all $n\geq 2$, it is easy to see that we have
\[
\WRT(n,\mathcal{H}_2)=\CRTW(n),
\]
since every graph of maximum degree $2$ is a disjoint union of paths and cycles.  So by Theorem~\ref{main}, we have $\WRT(n,\mathcal{H}_2)=n/(n-1)$ for all $n\geq 45$.  In addition to determining the unknown values of $\WRT(n,\mathcal{H}_2)$, it would be interesting to determine values of $\WRT(n,\mathcal{H}_k)$ for $k\geq 3$.


\end{document}